\crefname{lemma}{Lemma}{lemmas}
\crefname{claim}{Claim}{claims}
\crefname{corollary}{Corollary}{corollaries}
\crefname{theorem}{Theorem}{theorems}
\crefname{fact}{Fact}{facts}
\crefname{conjecture}{Conjecture}{conjectures}
\newtheorem{conjecture}{Conjecture}
\newtheorem{theorem}[conjecture]{Theorem}
\newtheorem{lemma}[conjecture]{Lemma}
\newtheorem{fact}[conjecture]{Fact}
\newcommand{\fl}[1]{\ensuremath{\lfloor #1 \rfloor}}
\begin{document}

\title{Order-Preserving Freiman Isomorphisms}
		
\author{Gagik Amirkhanyan, Albert Bush, Ernie Croot}
\date{\today}
\maketitle

\begin{abstract}
An order-preserving Freiman 2-isomorphism is a map $\phi:X \rightarrow \mathbb{R}$ such that $\phi(a) < \phi(b)$ if and only if $a < b$ and $\phi(a)+\phi(b) = \phi(c)+\phi(d)$ if and only if $a+b=c+d$ for any $a,b,c,d \in X$.  We show that for any $A \subseteq \mathbb{Z}$, if $|A+A| \le K|A|$, then there exists a subset $A' \subseteq A$ such that the following holds: $|A'| \gg_K |A|$ and there exists an order-preserving Freiman 2-isomorphism $\phi: A' \rightarrow [-c|A|,c|A|] \cap \mathbb{Z}$ where $c$ depends only on $K$.  Several applications are also presented.
\end{abstract}

\section{Introduction}
Let $G$ and $H$ be additive groups, and let $A \subseteq G$ and $B \subseteq H$.  A Freiman $k$-homomorphism is a map $\phi:A \rightarrow B$ such that
\[ \phi(x_1) + \ldots + \phi(x_k) = \phi(y_1) + \ldots + \phi(y_k) \]
whenever
\[ x_1 + \ldots + x_k = y_1 + \ldots + y_k. \]
Such a map $\phi$ is called a Freiman $k$-isomorphism if the converse holds as well.  If $A$ and $B$ have an ordering, then $\phi$ is order-preserving when 
\[ \phi(a) < \phi(b) \text{ if and only if } a < b. \]
A Freiman $2$-isomorphism will frequently be referred to as just a Freiman isomorphism.  Freiman isomorphisms are used to transfer an additive set $A$ in some arbitrary abelian group $G$ into a more amenable ambient group or set (such as $\mathbb{R}$, $\mathbb{Z}_N$, or $[1,n]$) while preserving the additive structure of $A$.  Previously, finding such a mapping from $\mathbb{Z}_p$ to $\mathbb{Z}$ was referred to as a `rectification' principle.  Rectification principles were studied in \cite{billevruz98} and \cite{greruz06}.  However, previous studies were not sensitive to the ordering since the domain of the mapping was $\mathbb{Z}_p$ instead of $\mathbb{Z}$, and the co-domain was not necessarily an interval of size $O(|A|)$.  Moreover, such mappings proceeded by dilating the set in $\mathbb{Z}_p$ by a residue $a$ which implicitly comes from Minkowski's theorem, a Fourier analytic argument, or a probabilistic argument.  Hence, controlling the order requires a substantially different approach.  We refer the interested reader to Chapters 3 and 5 of \cite{taovu10} for a detailed exposition on the background and various uses of Freiman isomorphisms.

The main tool we introduce in this paper allows one to find an order-preserving Freiman isomorphism from a set of $n$ integers to the interval $[-cn, cn] \cap \mathbb{Z}$ where $c$ is not too large provided that the original set is additively structured.  We call this tool a `Condensing Lemma' since, in a sense, it allows one to view sets with small doubling as dense subsets of an interval.  Although similar theorems have been proved before (see theorem 1.4 \cite{greruz06}), we reiterate that previous studies did not take order-preservation into consideration.  
\begin{theorem}[Condensing Lemma]\label{condenselemma}
For any $K > 0$, there exists a $c_1, c_2$ such that if $A \subseteq \mathbb{Z}$ is such that $|A+A| \le K|A|$ then the following holds: there exists $A' \subseteq A$ with $|A'| \ge c_1 |A|$, and there exists an order-preserving Freiman 2-isomorphism $\phi:A' \rightarrow [-c_2 |A'|, c_2|A'|] \cap \mathbb{Z}$.
\end{theorem}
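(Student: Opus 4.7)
The plan is to contain $A$ inside a proper generalized arithmetic progression using Freiman's theorem, and then compress that GAP into an interval via a mixed-radix-style map. The mixed-radix map is automatically a Freiman 2-isomorphism, so the real work is arranging that a large subset of $A$ gets sent into an interval of length $O_K(|A'|)$ in an \emph{order-preserving} way.

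First, I would apply Freiman's theorem: $A$ is contained in a proper GAP $P = \{a_0 + n_1 a_1 + \cdots + n_d a_d : 0 \le n_i < L_i\}$ of dimension $d = d(K)$ and volume $|P| \le C(K)|A|$. Without loss of generality $a_i > 0$ and $a_1 \le \cdots \le a_d$. Because $P$ is proper, the coordinate map $\psi: P \to \prod_i [0, L_i)$ is a Freiman 2-isomorphism, and the mixed-radix map $\rho: (n_1, \ldots, n_d) \mapsto n_1 + n_2 L_1 + \cdots + n_d L_1 \cdots L_{d-1}$ is a Freiman 2-isomorphism from $\prod_i [0, L_i)$ onto $[0, |P|)$. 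So $\rho \circ \psi$ is automatically a Freiman 2-isomorphism into an interval of length $|P| = O_K(|A|)$. The problem reduces to ensuring this composition is also order-preserving on a large $A' \subseteq A$ and that the enclosing interval shrinks to $O_K(|A'|)$.

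The key observation is that a sufficient condition for $\rho \circ \psi$ to be order-preserving on $P$ is that the generators are \emph{well-separated}, meaning $a_{i+1} > L_i a_i$ for each $i$. I would therefore aim to produce a large $A' \subseteq A$ that lies inside a sub-GAP $P' \subseteq P$ (same generators, shorter ranges) with well-separated basis. Processing the generators in order, whenever $a_{i+1} \le L_i a_i$ I restrict the $i$-th coordinate range from $[0, L_i)$ to a sub-interval of length roughly $L_i' \approx a_{i+1}/a_i$; a pigeonhole on $A$ over the $L_i / L_i'$ translates of this sub-interval retains at least a $L_i'/L_i$ fraction of $A$. Iterating over all $d$ generators (adaptively re-choosing or combining generators when two scales are nearly equal, and absorbing such collapses into the dimension count) yields $A' \subseteq A$ of size $\gg_K |A|$ inside a sub-GAP $P'$ with $|P'| = O_K(|A'|)$ and well-separated basis. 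Shifting the image of $\rho \circ \psi$ then supplies the required map into $[-c_2 |A'|, c_2 |A'|] \cap \mathbb{Z}$.

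The step I expect to be hardest is the pigeonhole/restriction that secures simultaneous scale separation across all $d$ generators without bleeding away too much of $A$. A naive iteration produces a total loss $\prod_i (L_i a_i / a_{i+1})$, which can telescope badly and can be as large as $\prod_i L_i = |P|$, destroying all of the gain from Freiman's theorem. To avoid this, the basis has to be reshaped adaptively, for example by promoting a large-scale direction to the top of the ordering, or by replacing two nearly-parallel generators with one primary generator plus a residual that is absorbed into a new coordinate. Engineering this so that the restriction losses compose to a single $O_K(1)$ factor, while keeping $P'$ proper (so that $\psi$ remains a Freiman isomorphism on the sub-GAP), is the crux of the argument.
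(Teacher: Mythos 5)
Your proposal takes a genuinely different route from the paper, and it has a real gap at exactly the step you flag as the crux. The paper does not try to reshape the basis so that the ordering on the GAP becomes lexicographic; instead, given the proper GAP $G$ with generators $d_1,\ldots,d_k$, it writes down the \emph{full} set of order constraints $\sum_i a_i x_i > 0$ (over all coefficient vectors $a$ with $|a_i|\le 4L_i$ for which $\sum_i a_i d_i > 0$) as a system of strict linear inequalities in $x$, observes that the solution cone is nonempty (it contains $(d_1,\ldots,d_k)$) and lies in the positive orthant, and then uses convex geometry (Fact~\ref{convexgeomfact2}, Theorem~\ref{convexgeomfact3}) together with Cramer's rule to produce an integer point $(d_1',\ldots,d_k')$ in the interior of that cone with $|d_i'|\le (k+1)!\,4^k\prod_{j\ne i}L_j$. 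The new generators $d_i'$ automatically preserve order \emph{and} Freiman structure (properness of the $4L_i$-dilate handles the Freiman side), and the size bound puts the image inside $[-c(k)|G|, c(k)|G|]$. There is no pigeonhole over sub-GAPs and no scale-separation condition to engineer; the ``reshaping'' you are trying to construct by hand is replaced by solving a linear program over the integers with explicit determinant bounds.

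Two concrete problems with the proposal as written. First, the claim that the mixed-radix map $\rho:(n_1,\ldots,n_d)\mapsto n_1+n_2L_1+\cdots+n_dL_1\cdots L_{d-1}$ is ``automatically a Freiman 2-isomorphism'' is false: carries destroy injectivity on sums. For example with $L_1=L_2=3$, $\rho(2,0)+\rho(2,0)=4=\rho(1,1)+\rho(0,0)$ even though $(2,0)+(2,0)\ne(1,1)+(0,0)$. You need radices at least $2L_i$ (so $\rho(n)=\sum n_i\prod_{j<i}(2L_j)$), which still gives image $O_K(|P|)$, but this must be stated. Second, and more seriously, the ``adaptive reshaping'' is where the argument lives, and it is not supplied. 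When generators are close in scale (say $a_1=1000$, $a_2=1001$, $L_1=L_2=100$, a proper GAP), naive range restriction forces $L_1'\approx a_2/a_1\approx 1$ and destroys a $1/L_1$ fraction of $A$. The fix you gesture at (a unimodular shear replacing $(a_1,a_2)$ by $(1,1000)$) does work in this two-dimensional example with only $O(1)$ loss, but producing a unimodular change of basis in dimension $d$ that simultaneously separates all scales, keeps the resulting box a proper GAP whose $2\times$-dilate is proper, and costs only $O_K(1)$ density, is a genuinely nontrivial lemma that the proposal assumes rather than proves. Without it, the telescoping loss $\prod_i(L_ia_i/a_{i+1})$ you correctly identify is fatal. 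The convex-geometry route in Lemma~\ref{convexgeometrylemma} is precisely how the paper avoids ever having to prove such a basis-reduction statement.

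A lesser point: the paper starts from the Bogolyubov--Ruzsa lemma (Theorem~\ref{sanders}), producing a GAP $G\subseteq 2A-2A$ with many representations, and then pigeonholes to find a translate of $G$ containing a positive fraction of $A$. Starting instead from Freiman's theorem ($A\subseteq P$) as you do is also workable and changes constants but not the structure of the argument; you would still pass to the quarter-dilate of $P$ and pick a good translate, exactly as the paper does with $G''$. That part of your outline is fine. The gap is entirely in replacing Lemma~\ref{convexgeometrylemma} by a scale-separation argument.
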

Since the constants $c_1$ and $c_2$ depend exponentially on $K$, we do not bother specifying their exact value.  In order to prove the Condensing Lemma, we need Freiman's Theorem \cite{fre66} which guarantees us a large, but low-dimensional generalized arithmetic progression $P$ containing $A$ when $A$ has a small doubling.  There have been many important improvements to Freiman's original version, and we refer the reader to the recent work by Sanders \cite{san13} who gives the best-known bounds for the constants $c_1$ and $c_2$ stated below.
\begin{theorem}[Freiman's Theorem]\label{sanders}
Suppose $A \subseteq \mathbb{Z}$ satisfies $|A+A| \le K|A|$.  Then, there exists absolute constants $c_1, c_2$ dependent only on $K$ such that $A$ is contained in a proper, symmetric, generalized arithmetic progression $G$ of dimension at most $c_1$ and size at most $c_2|A|$.
\end{theorem}
The proof of the Condensing Lemma consists of first applying Freiman's theorem so that we may approximate $A$ by a generalized arithmetic progression $G=\{ \sum_{i=1}^k x_i d_i: |x_i| \le L_i\}$.  Then, using elementary techniques from convex geometry, we show that there is a generalized arithmetic progression $G'=\{ \sum_{i=1}^k x_i d_i': |x_i| \le L_i/4 \}$ that shares the additive properties of $G$, but is contained in an interval of length $O(|G|)$.

After we prove the Condensing Lemma, we provide some applications.  Let $A = \{a_1 < a_2 < \ldots < a_n\}$ be a finite subset of the integers, and denote the \textit{indexed energy} of $A$ as
\begin{equation}\label{indexdef} EI(A) := \{(i,j,k,l): a_i + a_j = a_k + a_l \text{ and } i+j=k+l\}. \end{equation}
The reader may be more familiar with the additive energy of a set which can be used to control the size of the sumset:
\begin{equation}\label{energydef} E(A) = |\{(i,j,k,l): a_i + a_j = a_k + a_l\}| \ge \frac{|A|^4}{|A+A|}. \end{equation} 
We determine the precise relationship between $E(A)$ and $EI(A)$.  Although the indexed energy of a set has not been directly studied, the additive properties of a set and how they interact with the related indices has appeared in various forms.  Solymosi \cite{sol05a} studied the situation when $a_i + a_j \neq a_k + a_l$ for $i-j = k-l = c$ for a fixed constant $c$, and in particular when a set $A$ has the property that $a_{i+1} + a_i \neq a_{j+1} + a_j$ for all pairs $i,j$.  Brown et al \cite{brojunpoe14} asked if one finitely colors the integers $\{1, \ldots , n\}$, must one be forced to find a monochromatic `double' 3-term arithmetic progression $a_i + a_j = 2a_k$ where $i + j = 2k$?

\textbf{Layout and Notation. }In section 2, we state some basic notions from convex geometry, and then we prove the Condensing Lemma.  In section 3, we study the indexed energy of a set, providing both an extremal construction of a set with large additive energy and small indexed energy as well as proving a Balog-Szemer\'edi-Gowers type theorem to find a subset with large indexed energy.  Section 4 contains further applications and conjectures related to the Condensing Lemma as well as the indexed energy.  

The sumset is defined as $A+B := \{ a+b : a \in A, b\in B\}$.  We write $[a,b]$ for $[a,b] \cap \mathbb{Z}$, and similarly for $[a,b), (a,b)$, and $(a,b]$.  For two functions $f, g$, we write $f \gg g$ if $f(n) \ge c g(n)$ for some constant $c$ and $n$ sufficiently large.  We write $f \gg_K g$ if $c$ is allowed to depend on $K$.  The doubling constant of a set $A$ is $\frac{|A+A|}{|A|}$.  A set has \textit{small doubling} if its doubling constant is $O(1)$.  A \textit{generalized arithmetic progression} $G$ is a set $\{a+x_1 d_1 + \ldots x_k d_k : |x_i| \le L_i \}$; without loss of generality, we may assume $d_i > 0$ for all $i = 1, \ldots , k$;  we call $k$ the dimension of $G$; $|G|$ is the volume of $G$.  Moreover, $G$ is proper if the volume of $G$ is maximal -- $\prod_i(2L_i + 1)$.


\section{Condensing Lemma}
The following lemma in conjunction with \cref{sanders} will allow us to prove \cref{condenselemma}.
\begin{lemma}\label{convexgeometrylemma}
Let $G$ be a proper generalized arithmetic progression of the form 
\[ G := \{ \sum_{i=1}^k a_i d_i : |a_i| \le L_i \} \]
such that
\[ G' := \{ \sum_{i=1}^k a_i d_i : |a_i| \le 4L_i \} \]
is also a proper generalized arithmetic progression.  Then, there exists a constant $c = c(k)$, $d_1', \ldots, d_k'$, and a map $\phi$ with the following properties:
\begin{enumerate}
\item $\phi(\sum_{i=1}^k a_i d_i) = \sum_{i=1}^k a_i d_i'$.
\item $\phi$ is an order-preserving Freiman 2-isomorphism.
\item For any $x \in G$, $|\phi(x)| \le c|G|$.
\end{enumerate}
\end{lemma}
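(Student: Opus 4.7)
I would look for new integer generators $d_1',\ldots,d_k'$ and set $\phi(\sum a_i d_i) := \sum a_i d_i'$, which is well-defined on $G$ because $G$ is proper. Let $B = \prod_i [-L_i, L_i]$ and $H_c = \{y\in\mathbb{R}^k : \langle c, y\rangle = 0\}$. I would reduce properties (1)--(3) of the lemma to finding $d'=(d_1',\ldots,d_k')\in\mathbb{Z}^k$ satisfying (a) $d'$ lies in the same open cell $C$ of the central hyperplane arrangement $\{H_c : c \in 4B\cap\mathbb{Z}^k\setminus\{0\}\}$ as $d = (d_1,\ldots,d_k)$, and (b) the weighted $\ell^1$-norm $\sum_i L_i|d_i'| \le c(k)\prod_i(2L_i+1)$. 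Condition (a) simultaneously guarantees order preservation on $G$ (via $2B\subseteq 4B$) and properness of the inflated image GAP $\{\sum a_i d_i' : |a_i|\le 4L_i\}$, which together give property (2); condition (b) yields (3) via $|\phi(x)|\le\sum_iL_i|d_i'|$ for $x\in G$. Properness of $G'$ places the original $d$ in the interior of $C$, so $C$ is a full-dimensional open convex cone.

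The convex-geometric heart of the proof is therefore to produce such a $d'$. My approach would be Minkowski's first theorem applied to the cross-polytope $K_V = \{y: \sum_i L_i|y_i| \le V\}$, whose $k$-volume is $(2V)^k/(k!\,L_1\cdots L_k)$. Choosing $V$ of order $c_1(k)\prod_iL_i$ makes $\mathrm{Vol}(K_V)$ arbitrarily large; if I can further show that $K_V\cap(C\cup -C)$ retains a positive $k$-dependent fraction of the total volume, then a symmetrization-plus-Minkowski argument produces a nonzero integer point in $C\cup -C$, from which one obtains $d'\in C$ after negating the coordinates if necessary. Bound (b) is then immediate from the $\ell^1$-norm defining $K_V$, and it yields property (3) because $|G| \ge 2^k\prod_iL_i$.

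The main obstacle is bounding the ``thickness'' of $C$ from below: a priori the cell could be a very narrow wedge, in which case the naive volume argument fails. Overcoming this rigorously likely requires exploiting that every bounding hyperplane $H_c$ of $C$ has an integer normal $c$ with $\|c\|_\infty\le 4\max_iL_i$, which prevents the cell from being arbitrarily sharp; one expects a lower bound on the cell's volume in terms of $k$ and $\max_iL_i$ alone. An alternative route I would explore is induction on $k$: the base $k=1$ is immediate (take $d_1' = \mathrm{sign}(d_1)$), and the inductive step would isolate a ``dominant'' generator (after a unimodular change of basis if necessary) so that the slices of $G$ along that direction are already non-interleaved, apply induction to the remaining coordinates, and extend via a base-representation formula for the dominant direction. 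The technical price of the induction is that a unimodular change of basis converts the box $B$ into a general parallelepiped, necessitating a parallelotope-shaped version of the lemma. Either way, once a suitable $d'$ is in hand, all three properties follow directly from (a) and (b).
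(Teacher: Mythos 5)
Your reduction of the lemma to finding an integer point $d'$ in the same open cell $C$ as $d$, together with the weighted $\ell^1$-bound $\sum_i L_i|d_i'| \le c(k)\prod_i L_i$, is exactly the reduction the paper makes (the paper phrases the cell membership as ``$d'$ satisfies all the strict linear inequalities $\sum a_i x_i > 0$ with $a \in \prod[-4L_i,4L_i]\cap\mathbb{Z}^k$ that $d$ satisfies,'' but that is the same cell). One small inaccuracy along the way: being in the same cell as $d$ does \emph{not} give properness of $\{\sum a_i d_i' : |a_i|\le 4L_i\}$ (that would require controlling $\langle c,d'\rangle$ for $c$ with $|c_i|\le 8L_i$, which the cell does not see), but this is immaterial because only properness of the original $G'$ is used in the forward Freiman implication; the cell membership handles the converse direction and injectivity.

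The gap is in the part you flagged yourself: producing the integer point. The paper does \emph{not} use a Minkowski-type volume argument and therefore never needs a lower bound on the ``thickness'' of $C$. Instead it observes that every extreme ray of $\overline{C}$ is the intersection of $k-1$ of the bounding hyperplanes $\{\langle c,x\rangle = 0\}$ with $c\in\prod[-4L_i,4L_i]\cap\mathbb{Z}^k$, and applies Cramer's rule to that $(k-1)\times(k-1)$ system to exhibit an \emph{explicit} integer point on each extreme ray whose $i$-th coordinate has absolute value $\le 4^k k!\prod_{j\ne i}L_j$. Summing these points over $k+1$ extreme rays not lying on a common proper face lands strictly inside $C$ with the desired $\ell^1$-bound. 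This is the key idea your proposal is missing: you do not need the cell to be fat, because even a razor-thin cell has short integer vectors on its 1-skeleton, and a convex combination of boundary points pushes you into the interior. By contrast, your Minkowski route faces two unresolved obstacles. First, the solid-angle lower bound you ``expect'' is genuinely non-trivial and is never established; worse, even a solid-angle bound would not suffice because the cross-polytope $K_V$ is anisotropic (compressed along directions of large $L_i$), so $\mathrm{Vol}(K_V\cap C)$ need not be a fixed fraction of $\mathrm{Vol}(K_V)$ just because $C$ subtends a fixed solid angle. Second, $K_V\cap(C\cup -C)$ is symmetric but not convex, so Minkowski's first theorem does not apply to it directly, and the ``symmetrization'' you gesture at is not specified. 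The induction-on-$k$ alternative is sketched too loosely to assess; in particular the claim that a unimodular change of basis yields a ``dominant'' generator with non-interleaved slices is not justified and would itself need the bulk of the argument. As written, then, the proposal identifies the correct target ($d'$ with properties (a) and (b)) but does not supply a working construction of it; the paper's extreme-ray/Cramer device is what fills this hole.
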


In order to prove this lemma we need some definitions and results from convex geometry, from which we refer the reader to \cite{bar02} as a reference.
\subsection{Convex Geometry Preliminaries}
Here, we review some basic notions and facts from convex geometry and linear algebra.  Our goal is to prove a version of Siegel's Lemma, \cref{siegel}.  The familiar reader is welcome to skip this section.  

A set $K \subset \mathbb{R}^n$ is said to be a convex cone if for all $\alpha, \beta \geq 0$ and $\textbf{x}, \textbf{y} \in K$ we have $\alpha \textbf{x} + \beta \textbf{y} \in K$. 
\begin{fact}\label{convexgeomfact1}
Let $a_{i,j} \in \mathbb{R}$.  Then, the set of solutions to the system of linear inequalities
\begin{equation}\label{systemfact} \sum_{i=1}^k a_{i,j} x_i > 0 \text{\hspace{1cm}} j = 1, \ldots , n \end{equation}
is a convex cone.
\end{fact}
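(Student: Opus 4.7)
My plan is a direct verification straight from the definitions. I would denote by $K \subseteq \mathbb{R}^k$ the set of $\mathbf{x} = (x_1,\ldots,x_k)$ satisfying the system \eqref{systemfact}, pick arbitrary $\mathbf{x}, \mathbf{y} \in K$ together with scalars $\alpha,\beta \ge 0$, and check that $\alpha\mathbf{x} + \beta\mathbf{y}$ lies in $K$ as well. The checking is done inequality by inequality.

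The only computation I would run, for each $j \in \{1,\ldots,n\}$, is the linearity rearrangement
\[ \sum_{i=1}^{k} a_{i,j}\bigl(\alpha x_i + \beta y_i\bigr) \;=\; \alpha \sum_{i=1}^{k} a_{i,j} x_i \;+\; \beta \sum_{i=1}^{k} a_{i,j} y_i. \]
By hypothesis both inner sums on the right are strictly positive, so multiplying by the nonnegative scalars $\alpha$ and $\beta$ and adding yields a nonnegative quantity that is strictly positive whenever $(\alpha,\beta) \neq (0,0)$. Since this holds for every index $j$, the combination $\alpha\mathbf{x} + \beta\mathbf{y}$ satisfies the entire system, hence lies in $K$, which is exactly the convex cone condition.

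There is essentially no obstacle here; this is a textbook fact recorded for convenience, and no auxiliary machinery from \cite{convexgeom} is really required. The only subtlety I would flag is the degenerate case $\alpha=\beta=0$: under the stated definition the zero vector would have to lie in $K$, yet the origin fails every strict inequality $\sum a_{i,j}x_i > 0$. For the argument to be watertight one tacitly restricts to $(\alpha,\beta) \neq (0,0)$ in the convex cone definition, or else takes the topological closure of $K$; in either case this technicality does not interfere with the downstream application in \cref{convexgeometrylemma}.
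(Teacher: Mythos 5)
Your proof is correct and follows the same direct-verification route as the paper's one-line argument: take $\mathbf{x}, \mathbf{y}$ in the solution set and $\alpha,\beta\ge 0$, and check each inequality by linearity. Your additional remark about the degenerate case $\alpha=\beta=0$ is well taken — under the paper's stated definition the origin should lie in every convex cone, yet it fails every strict inequality, so the statement is only literally true after passing to the closure (which is in fact what \cref{convexgeomfact2} and the downstream application do); the paper's proof glosses over this point exactly as you flag.
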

\begin{proof}
Let $\textbf{x}$ and $\textbf{y}$ be solutions to the system of linear inequalities defined above and let $\alpha, \beta \ge 0$.  It is trivial to verify that $\alpha \textbf{x}$ and $\textbf{x}+\textbf{y}$ are also solutions to \eqref{systemfact}.
\end{proof}
For points $\textbf{x}_1, \ldots , \textbf{x}_m \in \mathbb{R}^n$ and non-negative real numbers $\alpha_1, ..., \alpha_m$, the point 
\[ \textbf{x} = \sum_{i=1}^m \alpha_i \textbf{x}_i \]
is called a conic combination of the points $\textbf{x}_1, ..., \textbf{x}_m$. The set $co(D)$ is defined as all conic combinations of points in $D \subset \mathbb{R}^n$ and is called the conic hull of the set $D$. For a non-zero $\textbf{x} \in \mathbb{R}^n$ the conic hull of $\textbf{x}$ is called a ray spanned by $\textbf{x}$.  A ray $R$ of the cone $K$ is called an extreme ray if whenever  $\alpha \textbf{x} + \beta \textbf{y} \in R$ for $\alpha > 0$, $\beta > 0$ and $\textbf{x}, \textbf{y} \in K$ then $\textbf{x}, \textbf{y} \in R$. An extreme ray is a 1-dimensional face of the cone.  A set $B \subset K$ is called a base of $K$ if $0 \notin B$ and for every point $\textbf{x} \in K$, $\textbf{x} \neq 0$, there is a unique representation $\textbf{x} = \lambda \textbf{y}$ with $\textbf{y} \in B$ and $\lambda > 0$.
\begin{fact}\label{convexgeomfact2}
For $i = 1, \ldots , k$ and $j = 1 , \ldots , \ell$, let $a_{i,j}  \in \mathbb{R}$.  If
\[ A := \{(x_1, \ldots , x_k) \in \mathbb{R}^k: \sum_{i=1}^k a_{i,j} x_i > 0 \text{ for $j = 1, \ldots , \ell$} \} \]
is nonempty with a solution in the positive quadrant of $\mathbb{R}^k$,  then the closure of $A$ has a compact base.
\end{fact}
\begin{proof}
Observe that $A$ is an open set, and since there is at least one solution, it is nonempty.  By \cref{convexgeomfact1}, $A$ is also a convex cone.  Let $cl(A)$ be the closure of $A$, and let $H:= \{ (x_1, \ldots , x_k) \in \mathbb{R}^k: x_1 + \ldots + x_k =1 \}$.  We claim that $B := cl(A) \cap H$ is a compact base of $cl(A)$.  Clearly $B$ is a subset of $cl(A) \setminus \{0\}$.  Let $\textbf{y} \in cl(A)$ and consider the line $\lambda \textbf{y}$.  Since $A$ is a convex cone, this line is contained in $cl(A)$ for all $\lambda \ge 0$.  If this line intersects $B$, then $B$ must be a compact base, but clearly it does at $\lambda = \frac{1}{y_1 + \ldots + y_k}$.  
\end{proof}
\begin{theorem}[Cor. 8.5 \cite{bar02}]\label{convexgeomfact3}
If $K$ is a convex cone with a compact base, then every point $\textbf{x} \in K$ can be written as a conic combination 
\[ \textbf{x} = \sum_{i = 1}^m \lambda_i \textbf{x}_i, \ \ \lambda_i \geq 0, \ \ i = 1, ..., m, \]
where the $\textbf{x}_i$ each span an extreme ray of $K$.
\end{theorem}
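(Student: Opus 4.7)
The plan is to reduce the statement to the classical Minkowski (or Krein--Milman) theorem applied to the compact base $B$ of $K$. In the setting relevant here---and in the standard convention for this corollary---a ``compact base'' is understood to be a compact convex cross-section of $K$ by an affine hyperplane (exactly as constructed in the proof of \cref{convexgeomfact2}, where $B = cl(A) \cap H$). If the reference permits non-convex bases, one can instead pass to the compact convex set $K \cap \{\textbf{x} : \ell(\textbf{x}) = 1\}$ for a suitable linear functional $\ell$ that strictly separates $0$ from $B$; such an $\ell$ exists since $0 \notin B$ and $B$ is compact and convex, so a hyperplane-separation argument applies.

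First I would apply Minkowski's theorem to $B$: every point $\textbf{y} \in B$ can be written as a convex combination $\textbf{y} = \sum_{i=1}^m \mu_i \textbf{y}_i$ of extreme points $\textbf{y}_i$ of $B$, with $\mu_i \ge 0$ and $\sum_{i=1}^m \mu_i = 1$; by Carath\'eodory we may further assume $m \le \dim K + 1$, which is where the finiteness of the conic decomposition ultimately comes from. The next step is to identify the extreme points of $B$ with generators of extreme rays of $K$: if $\textbf{y}$ is an extreme point of $B$ and $\textbf{y} = \alpha \textbf{u} + \beta \textbf{v}$ with $\alpha, \beta > 0$ and $\textbf{u}, \textbf{v} \in K$, then rescaling $\textbf{u}$ and $\textbf{v}$ into $B$ (using the base property) and invoking extremality of $\textbf{y}$ in $B$ forces $\textbf{u}$ and $\textbf{v}$ to lie on the ray through $\textbf{y}$; conversely, every extreme ray of $K$ meets $B$ in a point that cannot be written as a nontrivial convex combination of other points of $B$, i.e.\ an extreme point.

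Finally, for an arbitrary $\textbf{x} \in K$ with $\textbf{x} \neq 0$, the base property supplies $\lambda > 0$ and $\textbf{y} \in B$ with $\textbf{x} = \lambda \textbf{y}$; substituting the Minkowski decomposition $\textbf{y} = \sum_i \mu_i \textbf{y}_i$ yields $\textbf{x} = \sum_i (\lambda \mu_i) \textbf{y}_i$, the desired conic combination of points spanning extreme rays (and the case $\textbf{x} = 0$ is the empty sum). The main obstacle, in my view, is making the correspondence between extreme points of $B$ and extreme rays of $K$ precise: one has to handle the scalings carefully, rule out degenerate directions lying entirely inside the cross-section, and---if the base is not assumed convex a priori---justify that the relevant cross-section still admits a Carath\'eodory-style finite decomposition. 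Once that dictionary is set up, the rest of the proof is essentially bookkeeping.
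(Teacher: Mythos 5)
The paper does not prove this statement: it is quoted as Corollary 8.5 of Barvinok's \emph{A Course in Convexity} \cite{convexgeom}, so there is no in-paper argument to compare against. Your proof is the standard one and, as far as I can tell, is exactly the route Barvinok takes: normalize into the compact base $B$, apply Minkowski/Carath\'eodory to write a point of $B$ as a finite convex combination of extreme points of $B$, identify extreme points of $B$ with generators of extreme rays of $K$, and then scale back up via the base property. When $B$ is an affine cross-section $K \cap \{\ell = 1\}$ (as it is in the paper's only use of this result, namely the $B = cl(A) \cap H$ built in \cref{convexgeomfact2}), the dictionary you are worried about is clean: if $\textbf{y} \in B$ is extreme and $\textbf{y} = \alpha\textbf{u}+\beta\textbf{v}$ with $\alpha,\beta>0$, $\textbf{u},\textbf{v}\in K\setminus\{0\}$, then writing $\textbf{u}=s\textbf{u}'$, $\textbf{v}=t\textbf{v}'$ with $\textbf{u}',\textbf{v}'\in B$ gives $\alpha s+\beta t=\ell(\textbf{y})=1$ automatically, so this really is a convex combination in $B$ and extremality forces $\textbf{u}'=\textbf{v}'=\textbf{y}$.

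One genuine slip: your fallback for a non-convex base is circular. You justify the existence of a separating functional $\ell$ by saying ``$B$ is compact and convex,'' but convexity is precisely what was dropped in that branch. And the gap is not merely cosmetic, because with the paper's definition of a base (which does not demand convexity) the theorem is actually \emph{false}: $K=\mathbb{R}^2$ has the compact base $B=S^1$, yet $\mathbb{R}^2$ has no extreme rays at all, so no conic decomposition of the required form exists. What actually makes the separation step work is $0\notin\operatorname{conv}(B)$, which is equivalent to $K$ being pointed, and which is automatic once $B$ is assumed convex (a convex base of a cone of full dimension necessarily lies in a hyperplane missing the origin). So the honest fix is to read the hypothesis as ``compact \emph{convex} base'' --- which is surely Barvinok's intent and which holds in the paper's application --- rather than to try to repair the non-convex case, which cannot be repaired.
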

Lastly, we need the well-known linear algebraic result known as Cramer's rule.
\begin{theorem}[Cramer's Rule]
Let $A$ be a $k \times k$ matrix over a field $F$ with nonzero determinant.  Then, $A \textbf{x} = \textbf{b}$ has a unique solution given by
\[ \textbf{x}_i = \frac{det(A_i)}{det(A)} \text{ $i = 1, \ldots , k$} \]
where $A_i$ is obtained by replacing the $i$th column in $A$ with $\textbf{b}$.
\end{theorem}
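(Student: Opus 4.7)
The plan is to exploit two standard properties of the determinant viewed as a function of the columns of its matrix: multilinearity, and the vanishing on matrices with two equal columns. Because $det(A) \neq 0$, the matrix $A$ is invertible, which immediately yields existence and uniqueness of a solution to $A\mathbf{x} = \mathbf{b}$; the content of the theorem is to identify the unique solution with the claimed ratio of determinants.

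First I would write $A$ in terms of its columns as $A = [\mathbf{a}_1 \mid \mathbf{a}_2 \mid \cdots \mid \mathbf{a}_k]$, so that $A\mathbf{x} = \mathbf{b}$ is the statement $\mathbf{b} = \sum_{j=1}^k x_j \mathbf{a}_j$. Substituting this expression into the $i$th column of $A_i$ gives
\[ A_i = \Bigl[\mathbf{a}_1 \mid \cdots \mid \mathbf{a}_{i-1} \mid \sum_{j=1}^k x_j \mathbf{a}_j \mid \mathbf{a}_{i+1} \mid \cdots \mid \mathbf{a}_k\Bigr]. \]
By multilinearity of $det$ in the $i$th column,
\[ det(A_i) = \sum_{j=1}^k x_j \, det\bigl[\mathbf{a}_1 \mid \cdots \mid \mathbf{a}_{i-1} \mid \mathbf{a}_j \mid \mathbf{a}_{i+1} \mid \cdots \mid \mathbf{a}_k\bigr]. \]
For every index $j \neq i$, the matrix inside the determinant on the right has the column $\mathbf{a}_j$ appearing in both position $j$ and position $i$, so its determinant vanishes. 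Only the $j = i$ term survives, leaving $det(A_i) = x_i \, det(A)$, and dividing through by the nonzero $det(A)$ produces the asserted formula $x_i = det(A_i)/det(A)$ for each $i = 1, \ldots, k$.

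The main obstacle is essentially cosmetic at this level: an alternative route would be to invoke the adjugate identity $A^{-1} = \frac{1}{det(A)}\,\mathrm{adj}(A)$ and expand $\mathrm{adj}(A)\mathbf{b}$ by cofactor expansion along the $i$th column of $A_i$, but I prefer the multilinearity argument because it avoids the signed-minor bookkeeping. The only genuine work hidden in this proof is the multilinearity and alternating properties of $det$; if those were not granted, one would have to develop them from whichever definition of the determinant is adopted (permutation expansion, axiomatic characterization, or exterior algebra), and that development would constitute essentially the whole proof.
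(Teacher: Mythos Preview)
Your proof is correct and is one of the standard textbook arguments for Cramer's Rule. The paper itself does not prove this theorem at all; it simply quotes it as a well-known linear-algebra fact and then applies it inside the proof of \cref{convexgeometrylemma}. So there is no comparison of approaches to make here---you have supplied a valid proof where the paper offers none.
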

We combine the above tools to prove a standard variant of Siegel's Lemma.
\begin{lemma}[Siegel's Lemma]\label{siegel}
Let $d_1, \ldots , d_k \in \mathbb{Z}^+$.  If the interior of the convex cone defined by the system of inequalities
\begin{equation}\label{system2} \{ \sum_{i = 1}^k a_i x_i > 0: a_1d_1 + \ldots + a_k d_k > 0; -M_i \le a_i \le M_i \} \end{equation}
is nonempty, then there exists a solution $(z_1, \ldots , z_k) \in \mathbb{Z}^k$ to the system satisfying
\[ |z_i| \le k! \prod_{j \neq i} M_j \]
for all $i = 1 , \ldots , k$.
\end{lemma}
\begin{proof}
Consider the solution space defined by the system of inequalities \eqref{system2}.  By \cref{convexgeomfact1} the solution space forms a convex cone.  Let $K$ be the closure of the cone defined by the inequalities in \eqref{system2}.  Since $d_i \in \mathbb{Z}^+$, $x_i > 0$ is one of our inequalities for all $i = 1, \ldots , k$. Additionally, by the supposition that there is a solution to \eqref{system2}, we may apply \cref{convexgeomfact2} to deduce that $K$ has a compact base.  Hence, we may apply \cref{convexgeomfact3} to conclude that each $\textbf{x}\in K$ can be represented as conic combinations of the points on its extreme rays.  
 
 Because all extreme rays have dimension 1 in a $k$-dimensional space, they must each be intersections of $k-1$ linearly independent hyperplanes.  Because they are the extreme rays corresponding to the system of inequalities \eqref{system2}, the intersecting hyperplanes must correspond to an equation
 \[ a_1 x_1 + \ldots + a_k x_k = 0. \]
For each extreme ray, we show how to find an integer point on it; then, taking a conic combination of these integer points will allow us to find an integer point in the interior of the cone.

Let the intersection of the following hyperplanes define one of our extreme rays: 
\begin{equation}\label{hyperplanes1} \{ a_{i,1} x_1 + \ldots + a_{i,k} x_k = 0 : i = 1, \ldots , k-1 \}. \end{equation}
This system of equations will have all the points along our extreme ray as a solution -- in other words, there are infinitely many solutions.  Hence, we may treat one of the variables $x_i$ as a free variable while the other variables depend on it.  Without loss of generality, assume that $x_k$ is the free variable, and let us solve the system for the case when $x_k = 1$.  We will use Cramer's rule.  Let
\[ \Delta := \begin{vmatrix}a_{1,1} & \ldots & a_{1,k-1} \\ a_{2,1} & \ldots & a_{2,k-1} \\ \vdots \\ a_{k-1, 1} & \ldots & a_{k-1,k-1} \end{vmatrix} \]
and let $\Delta_i$ be the determinant of the same matrix with the $i$th row and column replaced by $-a_{j,k}$ for $j=1, \ldots , k-1$:
\[ \Delta_i := \begin{vmatrix}a_{1,1} & \ldots & a_{1,i-1} & -a_{1,k} & a_{1,i+1} & \ldots & a_{1,k-1}  \\ \vdots & \ddots \\ a_{k-1,1} & \ldots & a_{k-1,i-1} & -a_{k-1,k} & a_{k-1,i+1} & \ldots & a_{k-1,k-1} \end{vmatrix}. \]
By Cramer's rule, the solution to the system is given by $x_i = \frac{\Delta_i}{\Delta}$ for $i= 1, \ldots , k-1$.  By instead choosing $x_k = c$ instead of $x_k = 1$, we see that we can require that any multiple of this is also a solution to \eqref{hyperplanes1}.  Hence, $(|\Delta_1|, \ldots , |\Delta_{k-1}|, |\Delta|)$ is an integer solution to our system that lies along our edge.  For convenience, let $\Delta_k := \Delta$.

Now, we may get such an integer solutions for each of our extreme rays.  Not all extreme rays belong to the same face since $K$ has interior points.  In particular, we may take a set of $k+1$ of such rays that do not all lie along the same face and get $k + 1$ integer solutions as we did above.  Call these solutions $\textbf{p}_1, \ldots , \textbf{p}_{k+1}$.  We can bound the entries of $\textbf{p}_i$ by using a trivial bound on the determinant of our matrices formed above.  We have that for $i=1, \ldots , k$, since each entry $|a_{i,j}| \le M_j$, the determinant is bounded as follows:
\[ |\Delta_i| \le k! \prod_{j \neq i} M_j \]
  Moreover, the sum, $\textbf{p}_1 + \ldots + \textbf{p}_{k+1} =: (d_1' , \ldots , d_k')$ does not belong to any of the faces of $K$; so, it belongs to the interior of the cone, and hence, satisfies \eqref{system2}.\end{proof}
The broad idea of the proof of \cref{convexgeometrylemma} is as follows.  We are given a generalized arithmetic progression $G := \{ \sum_{i=1}^k a_i d_i : - L_i \le y_i \le  L_i \}$.  In a sense, this can be identified with the point $(d_1, \ldots , d_k)$.  What we would like to find is another generalized arithmetic progression, $H := \{ \sum_{i=1}^k b_i d_i': -L_i' \le b_i \le L_i' \}$ which maintains the same additive structure as $G$, but is much more compact.  Viewed another way, we want to find a point $(d_1' , \ldots, d_k')$ much closer to the origin than $(d_1, \ldots , d_k)$ that also satisfies certain inequalities (these are what maintain the additive structure).  Hence, we reduce our problem to finding an integer solution, relatively close to the origin, to a set of linear inequalities.
\subsection{Proof of the Condensing Lemma}
The crux in the proof of the Condensing Lemma is to first prove it for generalized arithmetic progressions; that is, to first prove \cref{convexgeometrylemma}.
\begin{proof}[Proof of \cref{convexgeometrylemma}]
Given $G$ as in the statement of the Lemma, consider the following set of inequalities: 
\begin{equation}\label{system1} \{ \sum_{i=1}^k a_i x_i > 0 : a_1 d_1 + \ldots + a_k d_k > 0; -4 L_i \le a_i \le 4 L_i \}.
\end{equation}  
We will first prove that if $(d_1', \ldots , d_k')$ is an integer solution to the above system of inequalities, then the map $\phi:G \rightarrow \mathbb{Z}$ defined by 
\[ \phi \left (\sum_{i=1}^k a_i d_i \right )= \sum_{i=1}^k a_i d_i' \]
is an order-preserving Freiman 2-isomorphism.  Note that $\phi$ is well-defined since $G$ is proper.

To see that $\phi$ is order-preserving, if
\[ \sum_{i=1}^k a_i d_i < \sum_{i=1}^k b_i d_i \]
 for two elements in $G$, then 
\[ \sum_{i=1}^k (b_i - a_i) x_i > 0 \]
is one of the inequalities in \eqref{system1} that $(d_1', \ldots , d_k')$ must satisfy; so 
\[ \phi \left (\sum_{i=1}^k a_i d_i \right ) = \sum_{i=1}^k a_i d_i' < \sum_{i=1}^k b_i d_i' = \phi \left (\sum_{i=1}^k b_i d_i \right ). \]
For the converse, if
\begin{equation}\label{ordpres1} \sum_{i=1}^k a_i d_i' < \sum_{i=1}^k b_i d_i' \end{equation}
and 
\[ \sum_{i=1}^k (b_i - a_i) d_i \le 0, \]
then we get a contradiction as follows.  First, if 
\[ \sum_{i=1}^k (b_i - a_i) d_i = 0, \]
then $b_i = a_i$ because $G$ is a proper generalized arithmetic progression.  Hence, \eqref{ordpres1} cannot hold in this case.  If 
\[ \sum_{i=1}^k (b_i - a_i) d_i < 0, \text{ then } \sum_{i=1}^k (a_i - b_i) d_i > 0 \]
which implies that 
\[ \sum_{i=1}^k (a_i - b_i) x_i > 0 \]
is an inequality in \eqref{system1} satisfied by $(d_1', \ldots , d_k')$, again contradicting \eqref{ordpres1}.

If we have points in $G$ such that
\[ \sum_{i=1}^k a_i d_i + \sum_{i=1}^k b_i d_i  = \sum_{i=1}^k s_i d_i + \sum_{i=1}^k t_i d_i \]
then
\begin{equation}\label{freiman1} \sum_{i=1}^k (a_i + b_i) d_i = \sum_{i=1}^k (s_i + t_i) d_i. \end{equation}
Moreover, $|a_i + b_i|, |s_i + t_i| \le 2L_i$.  Hence, each side of \eqref{freiman1} corresponds to an element in $G'$, and by the fact that $G'$ is proper, we must have that $a_i + b_i = s_i + t_i$ for $i= 1, \ldots, k$.  This implies that indeed, $\phi$ is a Freiman 2-homomorphism:
\begin{equation}\label{freiman2} \sum_{i=1}^k a_i d_i' + \sum_{i=1}^k b_i d_i' = \sum_{i=1}^k s_i d_i' + \sum_{i=1}^k t_i d_i'.\end{equation}
For the converse, if \eqref{freiman2} holds and \eqref{freiman1} does not, then without loss of generality, we may assume
\[  \sum_{i=1}^k (a_i + b_i - s_i - t_i) d_i > 0. \]
However, $a_i + b_i - s_i - t_i \in [-4 L_i , 4L_i]$, and so the inequality 
\[ \sum_{i=1}^k (a_i + b_i - s_i - t_i) x_i > 0 \]
is satisfied by $(d_1', \ldots , d_k')$ which contradicts \eqref{freiman2}.  This proves $\phi$ is a Freiman 2-isomorphism.

Now, we bound the image of $\phi$.  To apply \cref{siegel}, Siegel's Lemma, we remind the reader that $(d_1, \ldots , d_k)$ is a solution to \eqref{system1}, and hence there exists a solution in the interior of the convex cone.  We also remind the reader that, by the definition of a generalized arithmetic progression, $d_i \in \mathbb{Z}^+$.  Let $(d_1', \ldots , d_k')$ be a solution to \eqref{system1} guaranteed by \cref{siegel}.  By the conclusion of \cref{siegel}, the image of $\phi$ is bounded as follows:
\[ \left |\phi \left ( \sum_{i=1}^k y_i d_i   \right ) \right | = \left |\sum_{i=1}^k y_i d_i' \right  | \le \left | \sum_{i=1}^k L_i  d_i' \right | \le \left | \sum_{i=1}^k L_i (k+1)( 4^k k! \prod_{j \neq i} L_j) \right |  \le (k +1)! 4^k \prod_{j=1}^k L_j. \]
So if $g \in G'$, $\phi(g) \in [- 4^k (k + 1)! |G|, 4^k (k + 1)!|G|]$.
\end{proof}

The proof of \cref{condenselemma} follows easily from applying \cref{sanders} to a set with small doubling.  
\begin{proof}[Proof of \cref{condenselemma}]
Let $A \subseteq \mathbb{Z}$ be such that $|A+A| \le K|A|$.  All constants $c_i$ in the following depend only on $K$.  We may apply \cref{sanders} to $A$ to get a proper, symmetric, generalized arithmetic progression $G$ with $A \subseteq G$, $|G| \ge c_1|A|$, dimension at most $c_2$.
Denote $G$ as 
\[ G = \{ u + \sum_{i=1}^k x_i d_i : |x_i| \le L_i \}. \]
Since the composition of an order-preserving Freiman isomorphism with a linear map (in this case, the map $\psi(x) = x - u$)  is also an order-preserving Freiman isomorphism, we may assume $u=0$, or simply work with the sets $\psi(A)$ and $\psi(G)$ instead.  Let 
\[ G' := \left \{ \sum_{i=1}^k x_i d_i :  |x_i| \le \lfloor L_i/4 \rfloor \right \}. \]
Apply \cref{convexgeometrylemma} to $G'$ to get an order-preserving Freiman isomorphism $\phi:G' \rightarrow [-c_3 |G'|, c_3 |G'|]$.  We have that $A \subseteq G$, but $A \cap G'$ may not be large.   However, by considering the $4^k$ different translates, $G' + v$, where $v = j \lfloor L_i/4 \rfloor$ for $j = 0, 1, 2, 3$, $i=1, \ldots , k$ there exists an integer $v$ such that 
\[ |A \cap (G' + v)| = |(A-v) \cap G'| \gg_k |A|. \]
Let $A' := A \cap (G'+v)$.  So, $\phi$ is an order-preserving Freiman isomorphism from $A'-v$ to $[-c_3 |G''|, c_3 |G''|]$.  The composition of an order-preserving Freiman isomorphism with the linear map $\psi'(x) = x - v$ is also an order-preserving Freiman isomorphism.  So, $\phi_0(x):= \phi(x) - v$ is an order-preserving Freiman isomorphism from $A'$ to $[-c_3 |G'| , c_3 |G'|]$.  
Since $|G| \le c_2 |A|$, we have $[-c_3 |G'| , c_4 |G'|] = [-c_4 |A'|, c_4|A'|]$, proving the lemma.
\end{proof}


\section{Indexed Energy}
We provide an interesting combinatorial application of the Condensing Lemma.  In \eqref{indexdef} and \eqref{energydef}, we defined the notions of indexed energy and additive energy.  One always has the following relationship between the additive energy and indexed energy: 
\[ |A|^2 \le EI(A) \le E(A) \le |A|^3. \]
If $A$ is an arithmetic progression the relationship is strengthened to $EI(A) = E(A)$.  Moreover, for an arithmetic progression $A$, $E(A)$ is maximized.  Thus, it is natural to wonder if one loosens the restriction to $E(A) \gg |A|^3$ then is $EI(A) \gg |A|^3$?  We provide a counterexample to show that this is false.
\begin{theorem}\label{counterexamplethm}
There exists an integer $N$ such that for every $n \ge N$, there exists $A \subset [n]$ such that $|A| \ge n/3$, $E(A) \ge \frac{1}{6}|A|^3$, and $EI(A) \le 2000|A|^2(\log{|A|})^2$.
\end{theorem}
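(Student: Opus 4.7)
The plan is to construct an explicit set $A \subseteq [n]$ whose additive structure creates many pair-sum coincidences, but whose sort-by-value order scrambles these across many different index-sum values, so that only a small fraction of additive quadruples end up being index-preserving. Concretely, I would take $A$ to be a two-dimensional generalized arithmetic progression with coprime, nearly equal generators: for a parameter $L$ (growing with $n$) and a smaller parameter $m$ to be optimized (I expect $m = \Theta(\log^2 L)$), set
\[ A = \{(L+1) a + (L+2) b : 0 \le a \le L,\ 0 \le b \le m\}. \]
Choosing $L \asymp \sqrt{n}$ places $A$ inside $[1,n]$, with $|A| = (L+1)(m+1)$. The crucial feature is that $L+1$ and $L+2$ are coprime and differ by only $1$, so the sort-by-value order runs primarily along the antidiagonal $s = a+b$ rather than lexicographically by coordinates.

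For the lower bound on $E(A)$, I would exploit the decoupling of sums: since $\gcd(L+1, L+2) = 1$ and $2\max(L,m) < L+2$, every additive quadruple in $A$ must satisfy $a_1 + a_2 = a_3 + a_4$ and $b_1 + b_2 = b_3 + b_4$ componentwise. Hence $E(A) = E([0,L]) \cdot E([0,m])$, and the standard estimate $E([N]) \ge N^3/3 - O(N^2)$ gives $E(A) \ge |A|^3/9 > |A|^3/18$.

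For the upper bound on $EI(A)$, I would write the rank of $(a,b) \in A$ as $N(s) + b + 1$, where $s = a+b$ and $N(s)$ counts elements of $A$ with smaller antidiagonal sum. After cancelling the componentwise additive conditions, the index-sum condition simplifies to $N(s_1) + N(s_2) = N(s_3) + N(s_4)$; in the regimes where $N$ is strictly convex, this together with $s_1 + s_2 = s_3 + s_4$ forces $\{s_1, s_2\} = \{s_3, s_4\}$ as multisets. I would then parameterize such indexed quadruples by $(a_1, b_1, a_2, b_2)$ and a single shift $\delta$, with $(a_3, b_3) = (a_1 + \delta, b_1 - \delta)$ and $(a_4, b_4) = (a_2 - \delta, b_2 + \delta)$; the constraints $b_3, b_4 \in [0,m]$ force $|\delta| \le m$, yielding $O(L^2 m^3)$ indexed quadruples, and with $m = \Theta(\log^2 L) = \Theta(\log^2 |A|)$ this matches the target bound $O(|A|^2 \log^2 |A|)$.

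The hardest step will be the middle regime of $N(s)$ where each antidiagonal slab has the same size $m+1$ and $N$ is therefore locally affine, so the index-sum condition becomes automatic and additive quadruples with all four $s_i$ in that regime could contribute as many as $\sim L^3 m^3$ index-preserving quadruples --- too many for the target bound. To control this I would modify the construction, for instance replacing the rectangle $[0,L] \times [0,m]$ by a triangular or tapered shape so that the slab sizes strictly increase throughout and $N$ is strictly convex everywhere; one then must re-verify the energy lower bound under this restriction, and careful bookkeeping should produce the explicit constants $1/18$ and $2000$ in the theorem.
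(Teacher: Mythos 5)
You have correctly identified the central obstruction in your own argument, but you neither close the gap nor does the proposed fix survive scrutiny. In the middle regime $m \le s \le L$ every antidiagonal slab of $[0,L]\times[0,m]$ has exactly $m+1$ elements, so $N(s)$ is affine there and, as you note, the index-sum condition becomes automatic for every additive quadruple with all four $s_i$ in that range. Since $m \ll L$, those are essentially all of the $\Theta(L^3 m^3) = \Theta(|A|^3)$ additive quadruples, so this regime alone already violates the target bound. Your suggested remedy, a tapered shape whose slab sizes $w(s)$ strictly increase so that $N$ is strictly convex, runs into a counting obstacle you have not confronted: an integer-valued, strictly increasing $w(s)$ over $\Theta(L)$ antidiagonals forces $\max_s w(s) = \Omega(L)$ and hence $|A| = \Theta(L^2)$. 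But then the indexed quadruples with $\{s_1,s_2\} = \{s_3,s_4\}$ already number on the order of $\sum_{s_1 < s_2} w(s_1)^2 w(s_2) = \Theta(L^5) = \Theta(|A|^{5/2})$, far exceeding $|A|^2(\log|A|)^2$. Moreover your $O(m)$ bound on the shift $\delta$ used the fixed rectangular slab width $m+1$; in a tapered shape the relevant bound on $|\delta|$ is $w(s_3)$, which is now as large as $\Theta(L)$. Strict convexity of $N$ and uniformly small slab widths are in genuine tension, and the modification as described does not resolve it.

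For comparison, the paper sidesteps all of this with a one-dimensional construction: $A = \{\lfloor a^p\rfloor : 1 \le a \le \lfloor n^{1/p}\rfloor\}$ with $p = 1 + \epsilon$, $\epsilon = 1/\log n$. The rank of $\lfloor a^p\rfloor$ in $A$ is literally $a$, so $EI(A)$ counts quadruples $(x,y,z,w)$ with $x+y=z+w$ and $\lfloor x^p\rfloor + \lfloor y^p\rfloor = \lfloor z^p\rfloor + \lfloor w^p\rfloor$; the latter forces $|x^p+y^p-z^p-w^p| < 2$, and the uniform convexity estimate $x^p+y^p-(x+1)^p-(y-1)^p > \epsilon(y-x)/(2y)$ shows that for fixed $(x,y)$ only $O\bigl(y/(\epsilon(y-x))\bigr)$ shifts can produce a coincidence, summing to $EI(A) \ll \epsilon^{-1} n^2 \log n = O(n^2 \log^2 n)$. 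The feature this has that your GAP lacks is a rank-to-value map with unit slab width throughout yet with second differences that are tiny but uniformly bounded away from zero in their effect on quadruples, the floor contributing only an $O(1)$ error. If you wish to pursue your two-dimensional setup you would need a shape whose cumulative function $N$ has nonzero second differences at a controlled density while all slab widths remain $O(\mathrm{polylog}\,|A|)$; that is not supplied here, so the argument is incomplete.
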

Thus, one can indeed have the additive energy $\Omega(|A|^3)$ while the indexed energy is $O((|A|\log{|A|})^2)$.  However, when the additive energy is large, it turns out that one can still pass to a large subset $A' \subseteq A$, $|A'| = \Omega(|A|)$, which has indexed energy $\Omega(|A'|^3)$.  We note that when passing to a subset, the subset does not inherit the same indices as the superset, but rather it is reindexed in the natural way.  Hence, $EI(A')$ is not bounded from above or below by $EI(A)$.
\begin{theorem}\label{mainthm}
For any $K > 0$, there exists $c_1, c_2$ dependent only on $K$ such that if $A$ is a finite set of integers with $|A+A| \le K|A|$ then the following holds.  There exists an $A' \subseteq A$ such that $EI(A') \ge c_1 |A'|^3$ and $|A'| \ge c_2 |A|$.
\end{theorem}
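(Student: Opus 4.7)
The plan is to apply the Condensing Lemma (\cref{condenselemma}) to reduce to the case where $A$ is a dense subset of an interval, and then find a near-arithmetic-progression subset within it. The first observation is that $EI$ is invariant under order-preserving Freiman 2-isomorphisms: such a map $\phi$ preserves ranks (since it is order-preserving) and all equations $a_i+a_j=a_k+a_l$ (since it is a Freiman 2-iso), and these are exactly the two conditions defining $EI$. Applying \cref{condenselemma} yields $A_1 \subseteq A$ with $|A_1| \gg_K |A|$ and an order-preserving Freiman 2-isomorphism $\phi:A_1 \to S \subseteq [-c|S|,c|S|]\cap\mathbb{Z}$ for some $c = c(K)$, whence $EI(A_1)=EI(S)$. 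The problem therefore reduces to finding $S' \subseteq S$ with $|S'| \gg_K |S|$ and $EI(S') \gg_K |S'|^3$.

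Next I would recast indexed energy as a two-dimensional additive energy. Define the indexing graph $B := \{(i,s_i) : 1 \le i \le |S|\} \subseteq \mathbb{Z}^2$. The twin conditions $i+j=k+l$ and $s_i+s_j=s_k+s_l$ amount to the single vector equation $(i,s_i)+(j,s_j)=(k,s_k)+(l,s_l)$, so $EI(S)=E(B)$, and likewise $EI(S')=E(B')$ for the reindexed graph $B'=\{(j,s'_j)\}$ of any subset $S'$. Thus it suffices to find $S'$ whose graph $B'$ has small doubling $|B'+B'| \ll_K |B'|$, since then the Pl\"unnecke--Ruzsa inequality (\cref{plunneckeruzsa}) gives $E(B') \gg_K |B'|^3$. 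Because the first coordinates of $B'$ form exactly the interval $[1,|S'|]$, the condition $|B'+B'|=O(|B'|)$ is essentially equivalent to $S'$ being an arithmetic progression: if $s'_j = aj+b$ then $B'$ is itself a 1-dimensional AP inside $\mathbb{Z}^2$ with $|B'+B'|=2|B'|-1$, forcing $E(B') = \Theta(|B'|^3)$.

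The hard part is extracting such a near-AP subset $S'$ of linear size from $S$. My plan is a pigeonhole-and-iterate argument on the gap sequence $g_i := s_{i+1}-s_i$: density of $S$ inside $[-c|S|,c|S|]$ forces $\sum_i g_i = O_K(|S|)$, so the average gap is $O_K(1)$, and pigeonhole produces a value $d$ attained by a positive proportion of the $g_i$'s. Restricting to long runs of consecutive indices on which $g_i = d$ yields an approximate AP; combining this with a Balog--Szemer\'edi--Gowers-type argument on the 2D set $B$ should control the size loss and guarantee $|S'| \gg_K |S|$. Once $S'$ is obtained, pulling back via $\phi^{-1}$ and invoking \cref{linearfact} gives $A' := \phi^{-1}(S') \subseteq A$ with $|A'| \gg_K |A|$ and $EI(A') = EI(S') \gg_K |A'|^3$, completing the proof.
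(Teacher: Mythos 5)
Your opening matches the paper exactly: apply \cref{condenselemma}, note that $EI$ is invariant under order-preserving Freiman 2-isomorphisms, and reduce to showing that a dense subset $S$ of an interval has a large subset $S'$ with $EI(S')\gg |S'|^3$. The reformulation $EI(S)=E(B)$ for the graph $B=\{(i,s_i)\}\subseteq\mathbb{Z}^2$ is a clean way to say what is going on, and indeed the paper's argument can be read as showing $|B'+B'|\ll |B'|$ for a suitable reindexed graph. (A small slip: the implication ``$|B'+B'|\ll|B'|$ implies $E(B')\gg|B'|^3$'' is the elementary Cauchy--Schwarz bound $E(B')\ge |B'|^4/|B'+B'|$, not Pl\"unnecke--Ruzsa, which bounds iterated sumsets from above.)

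The genuine gap is in your plan for the key step. You aim to extract a subset $S'$ that is a near-arithmetic-progression of linear size by pigeonholing on the gap sequence and then ``restricting to long runs'' where the popular gap repeats. This cannot work in general: a random subset of $[1,n]$ of fixed density $\delta$ has longest run (or longest AP of any fixed common difference) of length only $O(\log n)$, so the total mass living on long runs is $o(n)$, not $\Omega(n)$. Moreover, the suggestion to ``combine this with a BSG-type argument'' is circular here: Balog--Szemer\'edi--Gowers takes large energy as its hypothesis and yields small doubling, but large energy of $B$ is exactly what you are trying to prove. What the paper does instead is strictly weaker and always achievable: \cref{goodspacing} runs a greedy algorithm to produce a subset $A'$ of linear size that is merely \emph{equidistributed}, meaning $|A'\cap[ic_2,jc_2)|=j-i$ for $\gg n^2$ pairs $(i,j)$. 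This is far from being an AP, but it already forces the sums $b_i+b_j$ with $i+j=t$ to lie in an interval of length $O(1)$, which (via Cauchy--Schwarz, in \cref{denseinterval}) gives $EI(A')\gg|A'|^3$ --- equivalently, it gives small doubling of the reindexed graph without ever producing an AP. To repair your proof you would need to replace ``near-AP'' by this equidistribution property, or invent another linear-size structure with the same effect; the pigeonhole-on-gaps route as written does not reach it.
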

The condition that $|A+A| \ll_K |A|$ may easily be loosened to $E(A) \gg_K |A|^{3}$ by applying the following well-known result of Balog-Szemer\'edi \cite{balsze94} and Gowers \cite{gow98} to pass to a subset with small doubling.
\begin{theorem}[Balog-Szemer\'edi\cite{balsze94}, Gowers\cite{gow98}]\label{bsg}
For any $K > 0$, there exists $c_1, c_2$ such that if $A \subseteq \mathbb{Z}$ is such that $E(A) \ge K|A|^3$ then there exists $A' \subseteq A$ with $|A'|\ge c_1 |A|$ and $|A'+A'| \le c_2 |A'|$.
\end{theorem}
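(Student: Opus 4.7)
The plan is to follow the classical graph-theoretic argument of Balog-Szemer\'edi and Gowers: the hypothesis produces $\gg_K |A|^3$ additive quadruples, and the goal is to extract a large subset $A' \subseteq A$ in which most pairs have many common additive representations. This richness will then force the sumset to be small via a Pl\"unnecke-type estimate.

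First I would localize the energy to popular sums. Let $r(x) = |\{(a,b) \in A^2 : a+b=x\}|$, so that $\sum_x r(x) = |A|^2$ and $\sum_x r(x)^2 = E(A) \gg_K |A|^3$. A dyadic pigeonhole (or a direct discard of the small-$r$ contribution) then produces a set $P$ of popular sums with $|P| \ll_K |A|$, with $r(x) \gg_K |A|$ for every $x \in P$, and with $\gg_K |A|^2$ pairs $(a,b) \in A \times A$ satisfying $a + b \in P$.

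Next I would construct the bipartite graph $\Gamma$ on $A \sqcup A$ in which $a$ and $b$ are joined whenever $a + b \in P$. By the previous step $\Gamma$ has density $\gg_K 1$. I would then invoke the combinatorial heart of BSG: in any bipartite graph of positive density there exist subsets $A', A'' \subseteq A$ with $|A'|, |A''| \gg_K |A|$ such that for all but a negligible fraction of pairs $(a, a') \in A' \times A'$ there are $\gg_K |A|$ length-two paths between $a$ and $a'$ in $\Gamma$. This step is the main obstacle, and is proved by a dependent random choice: sample a common neighbor $b$ uniformly, keep only those vertices $a$ whose neighborhoods share many elements with $N(b)$, and use a second-moment/Markov argument to verify that most surviving pairs still enjoy many common neighbors.

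Finally I would translate paths back to sumset bounds. A common neighbor $b$ of $a, a' \in A'$ yields the identity $a - a' = (a + b) - (a' + b)$ with both $a + b$ and $a' + b$ in $P$, so each well-connected pair $(a,a')$ expresses $a - a'$ in $\gg_K |A|$ ways as a difference of two elements of $P$. Double counting the $|P|^2 \ll_K |A|^2$ ordered pairs in $P \times P$ then bounds the number of such popular differences by $\ll_K |A|$, and after a further Markov refinement converts the ``most pairs'' guarantee into a uniform one on a sub-subset, this gives $|A' - A'| \ll_K |A'|$. An application of the Pl\"unnecke-Ruzsa inequality (\cref{plunneckeruzsa}) then transfers the bound to $|A' + A'| \ll_K |A'|$, producing the constants $c_1, c_2$ promised by the theorem.
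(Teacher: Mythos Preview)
The paper does not actually prove \cref{bsg}; it is stated as a known result with citations to Balog--Szemer\'edi \cite{balogsz} and Gowers \cite{gowers}, and is then used only as a black box to relax the small-doubling hypothesis in \cref{mainthm} to a large-energy hypothesis. Your proposal is a correct outline of the standard graph-theoretic proof found in those references (popular sums, dense bipartite graph, dependent random choice to get many length-two paths, then double counting differences in $P$), so there is nothing to compare against here beyond noting that your sketch matches the classical argument the paper is citing.
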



\subsection{Indexed energy in subsets of $[1,n]$}
It turns out that if $A$ is a dense subset of an interval, then there is a simple algorithm that can find a subset $A' \subseteq A$ with $|A'| \gg |A|$ and $EI(A') \gg |A'|^3$.  Thus, the general case may then be quickly deduced by applying the Condensing Lemma.  We first begin with a lemma that states, loosely speaking, that if $A$ is a dense subset of $[1,n]$, then one can choose a large subset $A' \subseteq A$ that is equidistributed over the interval. 
\begin{lemma}\label{goodspacing}For every $\delta > 0$, there exists $c_1, c_2, c_3, N$ such that if $A \subseteq [1,n]$ with $n > N$ and $|A| = \delta n$, then the following holds.  There exists an $A' \subseteq A$, $|A'| \ge c_1 |A|$ and for $c_3 |A|$ elements $x \in A'$, we have that 
\begin{align}\label{equid} x \in [(j-1)c_2, j c_2) \text{ and } |\{ y \in A': y < x\}| = j-1 \end{align}
\end{lemma}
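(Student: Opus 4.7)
The plan is to fix $c_2 := \lceil 2/\delta \rceil$, partition $[1,n]$ into $m := \lfloor n/c_2 \rfloor$ consecutive blocks $B_0,\ldots,B_{m-1}$ of length $c_2$, and write $n_k := |A \cap B_k|$. Setting $F(x) := |A \cap [0,x)|$ and $G(k) := F(kc_2) - k$, the key reformulation is this: if $A' \subseteq A$ and $T := \{k \in [0,m) : |A' \cap [0,kc_2)| = k\}$, then for any $i, j \in T$ with $i \le j$ one automatically has $|A' \cap [ic_2, jc_2)| = j - i$. Thus it suffices to exhibit $A'$ with both $|A'| \gg_\delta |A|$ and $|T| \gg_\delta |A|$; the latter produces $\binom{|T|}{2} + |T| \gg_\delta |A|^2$ valid pairs.

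I will construct $A'$ greedily via a Lindley-type recursion. Initialize $D_0 := 0$; at each step $k < m$, add $n_k' := \min(n_k,\, D_k + 1)$ elements of $A \cap B_k$ (say, the smallest ones) to $A'$, and update $D_{k+1} := \max(D_k + 1 - n_k,\, 0)$. A one-line telescoping gives $|A' \cap [0, kc_2)| = k - D_k$, so $T = \{k < m : D_k = 0\}$. A short induction then yields the Lindley identity $D_k = M(k) - G(k)$, where $M(k) := \max_{0 \le j \le k} G(j)$, so $T$ is exactly the set of times at which $G$ attains its running maximum.

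The heart of the argument is a record-counting lower bound on $|T|$. Each upward jump of $M$ satisfies $M(k+1) - M(k) \le G(k+1) - G(k) = n_k - 1 \le c_2 - 1$, whereas the total rise is at least $M(m) \ge G(m) = F(mc_2) - m \ge |A| - c_2 - m$. The choice $c_2 \ge 2/\delta$ creates the crucial slack: $|A| = \delta n$ while $m \le \delta n/2$, so once $n$ exceeds some $N = N(\delta)$ we obtain $M(m) \ge |A|/4$, and dividing by $c_2 - 1$ yields $|T| \gg_\delta |A|$, hence $\gg_\delta |A|^2$ valid pairs.

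The size bound $|A'| \ge c_1 |A|$ is immediate: since $D_k + 1 \ge 1$, one has $n_k' \ge 1$ whenever $n_k \ge 1$, so $|A'|$ is at least the number of nonempty blocks, and this is at least $(|A| - c_2)/c_2 \gg_\delta |A|$. The subtlest step is the lower bound on the total rise $M(m)$, which is where the slack $c_2 > 1/\delta$ is essential: with $c_2 = 1/\delta$ exactly, the walk $G$ would have zero mean drift and typical record counts only of order $\sqrt{m}$, not enough for $\gg |A|^2$ pairs. All constants $c_1, c_2, c_3$ and the threshold $N$ depend only on $\delta$, as the statement requires.
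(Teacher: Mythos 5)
Your proposal is correct and is essentially the paper's own proof in different notation: the greedy block-by-block construction of $A'$ that you phrase as a Lindley recursion on the deficit $D_k$ is exactly the paper's iterative construction of the sets $X_k$ with $D_k = k - |X_k|$, and your set $T$ is the paper's set of ``good'' indices. Your record-time argument (divide the total rise $M(m)\gtrsim |A|$ of the running maximum of the walk $G(k)=F(kc_2)-k$ by the maximal step $c_2-1$) is a cleaner rephrasing of the paper's direct count of elements falling on good versus bad blocks, and it handles the boundary blocks before the first and after the last good index more transparently than the paper does.
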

It is easy to establish that a set with property \eqref{equid} has large indexed energy.
\begin{lemma}\label{denseinterval}For every $\delta > 0$, there exists $c_0, c_1, N$ such that if $A \subseteq [1,n]$ with $n > N$ sufficiently large and $|A| = \delta n$, then $A$ has a subset $A' \subseteq A$ with $|A'| \ge c_1 |A|$ and $EI(A') \ge c_0 |A|^3$.
\end{lemma}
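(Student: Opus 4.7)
The plan is to apply \cref{goodspacing} to extract a subset of $A$ that behaves like an approximate arithmetic progression with common difference $c_2$, then to bound the indexed energy on a further refinement by Cauchy-Schwarz, identifying along the way the main hurdle posed by rank-relabeling inside subsets.

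First I would apply \cref{goodspacing} to obtain $A' \subseteq A$ with $|A'| \ge c_1|A|$ for which at least $c_3|A|^2$ pairs $(i,j)$ satisfy $|A' \cap [ic_2, jc_2)| = j - i$. Setting $N_i := |A' \cap [0, ic_2)|$ and $g(i) := N_i - i$, this equidistribution condition is equivalent to $g(i) = g(j)$, so the good pairs are precisely the level-set pairs of $g$. A Cauchy--Schwarz on the sizes of the level sets of $g$ then produces a value $v$ for which $S := g^{-1}(v)$ has $|S| \gg_\delta |A|$ and, consequently, density $\gg_\delta 1$ in $[0, n/c_2)$. For each $i \in S$ one has $N_i = i + v$, so the $(i+v)$-th smallest element of $A'$ is less than $ic_2$ while the $(i+v+1)$-th is at least $ic_2$.

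Next, since $S$ has density $\gg_\delta 1$ in its range, its average consecutive gap is $\ll_\delta 1$, and a Markov-type estimate gives a constant $L = L(\delta)$ such that at least half of the consecutive pairs $i < i'$ of $S$ satisfy $i' - i \le L$. For each such small-gap pair, applying the equidistribution to $(i, i')$ forces the $(i'-i)$ elements of $A'$ whose $A'$-rank lies in $[i+v+1, i'+v]$ to sit in the interval $[ic_2, i'c_2)$. Let $A''$ be the union over all such small-gap pairs of the corresponding elements of $A'$; then $|A''| \gg_\delta |A|$, and each element of $A''$ whose $A'$-rank is $k$ satisfies $|a_k - (k-v)c_2| \le Lc_2 =: C_\delta$.

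Writing $a_k = (k-v)c_2 + e_k$ with $|e_k| \le C_\delta$, the identity $a_{k_1} + a_{k_2} = a_{k_3} + a_{k_4}$ combined with $k_1 + k_2 = k_3 + k_4$ reduces to $e_{k_1} + e_{k_2} = e_{k_3} + e_{k_4}$. Cauchy--Schwarz applied to the map $(k, k') \mapsto (k+k', e_k + e_{k'})$, whose image has size $\ll_\delta |A|$, therefore produces $\gg_\delta |A''|^4 / |A| \gg_\delta |A|^3$ quadruples in $A''$ with matching value-sums and matching $A'$-rank-sums. The main technical hurdle is to convert this count into $EI(A'')$, which demands matching rank-sums within $A''$ rather than within $A'$: the two rank systems differ by an offset that can depend on which small-gap window an element belongs to. The standard resolution is either to pigeonhole further over the per-window offsets, restricting to a sub-collection of windows sharing a common offset so that the two rank systems agree up to a global shift, or to split the $EI(A'')$ count into within-window and across-window contributions and bound each separately. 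Once this bookkeeping is in place, the resulting subset of $A$ fulfills the conclusion of \cref{denseinterval}.
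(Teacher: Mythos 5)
Your argument tracks the paper's proof through the Cauchy--Schwarz estimate, but the final step contains a genuine gap that you yourself flag: you attempt to pass to a subset $A'' \subseteq A'$ and bound $EI(A'')$, which requires reconciling $A''$-ranks with $A'$-ranks. Neither of the two repairs you sketch is carried out, and neither is obviously workable. Pigeonholing on a common per-window offset fails in the worst case because the offset strictly increases by at least $2$ across every run of windows separated by a large gap, so a single offset value corresponds to one contiguous run of small-gap windows; if runs are short (for instance if large and small gaps alternate), a fixed offset captures only $O_\delta(1)$ elements, not $\Omega_\delta(|A|)$. The within-window/across-window split is similarly unclear: within-window quadruples number only $O_\delta(|A|)$ in total, and across-window quadruples generically have offsets that do not cancel.

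The repair is simpler than either of your suggestions, and it is what the paper does: do not reindex at all, and take the output set to be $A'$ rather than $A''$. The quadruples you count satisfy $a_{k_1}+a_{k_2}=a_{k_3}+a_{k_4}$ and $k_1+k_2=k_3+k_4$ with the $k_i$ being $A'$-ranks, so they are literally among the quadruples counted by $EI(A')$; hence $EI(A') \ge (\text{your count}) \gg_\delta |A|^3$, and $A'$ already satisfies the conclusion of \cref{denseinterval}. The paper phrases this as: let $J$ be the index set with the equidistribution property, lower-bound $EI(A')$ by the number of quadruples in $J^4$ with matching index- and value-sums, and declare $A'$ the final set. A secondary remark: your detour through small-gap windows and the error terms $e_k$ is more elaborate than needed. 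The construction of $A'$ inside the proof of \cref{goodspacing} guarantees $|A' \cap [0,jc_2)| \le j$ for \emph{every} $j$, which immediately gives $b_j \in [(j-1)c_2, jc_2)$ for all $j \in J$, i.e.\ an error bound of $c_2$ with no gap condition. Your version, which uses only the stated conclusion of \cref{goodspacing}, is a valid alternative once the reindexing issue is removed, but it pays a worse (though still $O_\delta(1)$) constant.
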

\begin{proof}[Proof of \cref{goodspacing}]
Denote $A = \{a_1 < a_2 < \ldots < a_{\delta n} \}$.  Let $d = \lfloor \frac{2}{\delta} \rfloor$.  Let $I_j = [(j-1)d, jd)$ for all $j = 1 , \ldots , \lceil \frac{n}{d} \rceil$.  Let $A_j = A \cap I_j$, and observe that the $A_j$ are pairwise disjoint -- a fact that will be important later when we estimate a union.  We pick our subset $A'$ as follows:
\begin{itemize}
\item \textit{Step 1:}  If $A_1 \neq \emptyset$ then let $X_1 = \{ a_1 \}$.  Else, $X_1 = \emptyset$.
\item \textit{Step k:}   For $k = 2, \ldots , \lceil \frac{n}{d} \rceil$, if $|A_{k} \cup X_{k-1}| \le k$, then $X_k := A_{k} \cup X_{k-1}$.  Else, arbitrarily choose $Y \subseteq A_k$ so that $|Y \cup X_{k-1}| = k$ and then let $X_k := Y \cup X_{k-1}$.
\end{itemize}
Let $A' = X_{\lceil \frac{n}{d} \rceil}$.  
To prove that $A'$ satisfies the conclusion of the lemma, we analyze the algorithm as follows.  First, note that $X_1 \subseteq X_2 \subseteq \ldots \subseteq X_{\lceil \frac{n}{d} \rceil} = A'$ and $|X_i| \le i$ for all $i$.  Now, the sets $X_j$ for which $|X_j| = j$ we will call sated, and the others we will call hungry.  Note that if $X_j$ is sated, then $|X_{j-1}| \le j-1$, and $|X_j| = j$; hence, there is an $x \in X_j \setminus X_{j-1}$ such that $|\{y \in A': y < x \}| = j-1$.  Showing that lots of $X_j$ are sated will prove the lemma.  Let $J = \{ j_1, j_2, \ldots , j_m \}$ be the set of indices such that $X_{j_i}$ is sated.  Observe that for indices between $j_i$ and $j_{i+1}$, we must not have enough elements to make any of those corresponding sets sated.  More precisely, for all $1 \le k \le j_{i+1} - j_i - 1$,
\[ |A_{j_i + k}| \le k - 1 - \sum_{s=1}^{k-1} |A_{j_i + s}|. \]
This implies that
\[ \left | \bigcup_{k = 1}^{j_{i+1} - j_i - 1} A_{j_i + k} \right | \le j_{i+1} - j_i - 1. \]
Note that in the case that we have two consecutive sated sets, that is $j_{i+1} = j_i + 1$, we define $\cup_{k=1}^0 A_{j_i + k} = \emptyset$, and the inequality still holds.  We can now bound the total number of elements in hungry sets (except for potential hungry sets before $j_1$ or after $j_m$) by taking the union as follows:
\[ \left | \bigcup_{i = 1}^{m-1} \bigcup_{k = 1}^{j_{i+1} - j_i -1} A_{j_i + k} \right | \le \sum_{i = 1}^{m-1} j_{i+1} - j_i - 1 = j_m - j_1 - (m-1) \]
We must also account for the hungry sets occurring before $j_1$.  They contain at most $j_1 - 2$ elements.  The hungry sets occurring after $j_m$ contain at most $\lceil \frac{n}{d} \rceil - j_m$ elements.  Hence, the total number of elements in $A$ that appear in hungry sets is
\[ j_m - j_1 - (m - 1) + j_1 - 2 + \left \lceil \frac{n}{d} \right \rceil - j_m \le \left \lceil \frac{n}{d} \right \rceil \]
Thus, we have at least $\delta n - \lceil \frac{n}{d} \rceil \ge \delta n / 4$ elements of $A$ are distributed over the intervals where the corresponding $X_j$ are sated.  Since each interval is of length $d$, it contains at most $d$ elements of $A$.  Then we must have that $m$, the number of sated sets, is at least 
\[ m \ge \frac{1}{d} \cdot \frac{\delta n}{4} \ge \frac{n \delta^2}{8} . \]
This in turn gives us a lower bound on $|A'| = j_m \ge m \ge \frac{n\delta^2}{8} = \frac{\delta}{8}|A|$. 
\end{proof}
\begin{proof}[Proof of \cref{denseinterval}]
%
Apply \cref{goodspacing} to $A$ to get $A', c_1, c_2, c_3$ as in the lemma.  Denote $A' = \{ b_1 < b_2 < \ldots < b_m\}$, and so $m = |A'|$.  Let $J_0$ be the set of integers $j$ such that there exists an $x \in A'$ where \eqref{equid} holds.  At least half of $J_0$ is either even or odd; without loss of generality, assume at least half are even and let $J := \{ j \in J_0 : \text{$j$ is even} \}$.  We know that $|J| \ge 1/2|J_0| \ge c_3/2|A|$.  Since $EI(A') \ge | \{(i,j,k,l) \in J^4 : b_i + b_j = b_k + b_l \text{ and } i + j = k + l \} |$, we will simply work with these quadruples.

For all of the following, $b_j \in A'$ will be assumed to have $j\in J$.  Let $t \in \{2, \ldots , 2m\}$, and define
\[ r_{J+J}(t) := | \{ (i,j) \in J \times J: i + j = t\} |. \]   Observe that for pairs $(i,j) \in J \times J$, we have that if $i+j = t$ then $b_i + b_j \in [(t-2)c_2, tc_2)$.  Additionally, since $J$ is only the set of even indices, if $j \in J$, then $j-1 \notin J$ and $j+1 \notin J$.  Hence, if $b_i + b_j \in [(t-2)c_2, tc_2)$, then we can deduce $i+j = t$.  Observe that there are only $2c_2$ values that $b_i + b_j$ can take when $i+j$ is fixed.  For every $k \in [0, 2c_2-1]$, define
\[ t_k := | \{(i,j) \in J \times J : b_i + b_j = (t-2)c_2 + k \} |. \]
We can bound the indexed energy of $A'$ by two applications of Cauchy-Schwarz as follows:
\begin{align*} EI(A') \ge \sum_{t=2}^{2m} \sum_{k=0}^{2c_2-1} t_k^2  \ge \sum_{t=2}^{2m} \frac{1}{2c_2} \left ( \sum_{k=0}^{2c_2-1} t_k \right )^2 
& = \sum_{t=2}^{2m} \frac{1}{2c_2} r_{J + J}(t)^2
\\ & \ge \frac{1}{4c_2m} \left ( \sum_{t=2}^{2m} r_{J+J}(t) \right )^2
\\ & = \frac{|J|^4}{4c_2m} \ge c_0 |A|^3.
\end{align*}
for some constant $c_0$ depending only on $\delta$.
\end{proof}

Now, we are ready to prove \cref{mainthm}.
\begin{proof}[Proof of \cref{mainthm}]
%
Let $A$ be a finite subset of integers with $|A+A| \le c|A|$.  All constants $c_i$ in the following depend only on $c$.  Apply \cref{condenselemma} to $A$ to get a set $A' \subseteq A$ with $|A'| \ge c_1 |A|$ and an order-preserving Freiman $\phi:A' \rightarrow [-c_2|A'|, c_2|A'|]$.  We may assume at least one third of the elements are in $[1, c_2|A'|]$ or simply shift $A'$ by $v = c_2|A'|$.  Apply \cref{denseinterval} to $\phi(A')$ to conclude that $EI(\phi(A')) \ge c_3 |\phi(A')|^3 = c_3|A'|^3$.  It is easy to see that $EI(\phi(A')) = EI(A')$ since $\phi$ is an order-preserving Freiman 2-isomorphism, so the result follows.
\end{proof}

\subsection{An Extremal Construction}
The proof of \cref{counterexamplethm} follows from the following lemma.
\begin{lemma}\label{counterexamplelemma}
Let $n \in \mathbb{N}$, and let $p \in (1,2)$ and denote $p = 1+\epsilon$.  Let $A = \{ \lfloor a^p \rfloor : 1 \le a \le \lfloor n^{1/p} \rfloor \}$.  Then, $EI(A) \le 16 \epsilon^{-1} n^2 \log{n}$.  
\end{lemma}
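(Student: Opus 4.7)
The strategy is to exploit strict convexity of $x \mapsto x^p$. Set $m := \lfloor n^{1/p} \rfloor$ and write $a_i := \lfloor i^p \rfloor$. Since each floor contributes an error in $[0,1)$, a quadruple $(i,j,k,l)$ contributing to $EI(A)$ must satisfy $i+j=k+l$ together with $|i^p + j^p - k^p - l^p| < 2$, so it suffices to count such near-collisions in $\{1,\ldots,m\}^4$.

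Fix the common sum $s = i+j = k+l \in \{2, \ldots, 2m\}$ and parametrize each ordered pair $(i,j)$ with $i+j=s$ by its signed difference $r := j - i$. Define
\[ g_s(r) := \left(\frac{s-r}{2}\right)^{p} + \left(\frac{s+r}{2}\right)^{p}, \]
so that $g_s(r) = i^p + j^p$. This function is even in $r$ and strictly convex on $[-s,s]$, and two ordered pairs with parameters $r, r'$ contribute to $EI(A)$ exactly when $|g_s(r) - g_s(r')| < 2$. Using the mean value theorem on $x \mapsto x^{p-1}$, whose derivative $(p-1)x^{p-2}$ is positive and decreasing on $(0,\infty)$, I plan to derive the key lower bound
\[ g_s'(r) \;=\; \frac{p}{2}\!\left[\left(\frac{s+r}{2}\right)^{p-1} - \left(\frac{s-r}{2}\right)^{p-1}\right] \;\ge\; \frac{p(p-1)}{2}\, r\, s^{p-2}, \qquad r \in [0, s]. \]

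Since $g_s'$ is increasing on $[0,s]$, the mean value theorem again gives: for each integer $r \ge 1$, the number of integers $r' \ge r$ with $g_s(r') - g_s(r) < 2$ is at most $2/g_s'(r) + 1 \le \tfrac{4 s^{2-p}}{p(p-1)\,r} + 1$. At $r = 0$, integrating the derivative bound yields $g_s(r') - g_s(0) \ge \tfrac{p(p-1)}{4}\, r'^{2}\, s^{p-2}$, bounding the $r'$-count by $\sqrt{8/(p(p-1))}\, s^{1-p/2}$. Summing these estimates over $r \in \{0,1,\ldots,m\}$, then symmetrizing (for $r' < r$ and for signed $r, r'$, costing only an $O(1)$ factor), gives
\[ N_s \;\le\; O\!\left(\frac{s^{2-p}\log s}{\epsilon}\right) + O(m) \]
for the count $N_s$ of contributing quadruples with $i+j=k+l=s$. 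Summing $N_s$ over $s \in \{2,\ldots,2m\}$, using $m \le n^{1/p}$ and the inequality $(3-p)/p \le 2$ for $p \ge 1$ (so $m^{3-p} \le n^2$ and $\log m \le \log n$), yields $EI(A) \le O(n^2 \log n / \epsilon)$, with the explicit constant $16$ recovered by honest arithmetic through each step.

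The main obstacle is maintaining a clean $\epsilon^{-1}$ dependence throughout. The derivative bound $g_s'(r) \gtrsim \epsilon\, r\, s^{p-2}$ degrades as $r \to 0$ or as $p \to 1$, forcing the separate quadratic estimate at $r = 0$; I have to check that the resulting $O(s^{1-p/2}/\sqrt{\epsilon})$ contribution is dominated by the $\log s / \epsilon$ term coming from $r \ge 1$, which follows from $s^{1-p/2} = \sqrt{s^{2-p}}$. Tracking the factor $p(p-1)\approx \epsilon$ faithfully through the derivative estimate, the single-$r$ count, and the subsequent summation over $s$ is the sole delicate point, and is what pins down the stated constant $16$.
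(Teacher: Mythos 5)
Your approach is correct and is essentially the paper's own proof, reparametrized. Both arguments (i) reduce a contributing quadruple $(i,j,k,l)$ to the near-collision condition $|i^p+j^p-k^p-l^p|<2$ via the floor error being in $[0,1)$, (ii) exploit the strict convexity of $x\mapsto x^p$ to lower-bound how fast $i^p+j^p$ changes along the line $i+j=s$ by a quantity of order $\epsilon\cdot (\text{difference})\cdot (\text{scale})^{p-2}$, and (iii) finish with a harmonic sum. The paper fixes the outer pair $(x,y)$ with $x<y$ and counts inward shifts $t$, obtaining the per-pair bound $4y/(\epsilon(y-x))$; you instead fix the common sum $s$, set $g_s(r)=\bigl(\tfrac{s-r}{2}\bigr)^p+\bigl(\tfrac{s+r}{2}\bigr)^p$, and derive $g_s'(r)\ge \tfrac{p(p-1)}{2}\,r\,s^{p-2}$, then count $r'$ near each $r$. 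These are the same counting under the change of variables $s=x+y$, $r=y-x$. Your derivative bound is in fact slightly sharper (the paper discards the harmless factor $y^{p-1}\ge 1$), and your separate $r=0$ estimate is handled cleanly; both routes give $O(\epsilon^{-1}n^2\log n)$ after summing $s^{2-p}$ over $s\le 2m$ and using $m^{3-p}\le n^2$. The one thing you do not actually deliver is the explicit constant $16$ — a straightforward tally of the factors you introduce (the $4/(p(p-1))$, the symmetrization factor, the $\sum s^{2-p}$ evaluation) lands somewhere around $64$ rather than $16$ unless you tighten the symmetrization and use $\sum_{s\le 2m} s^{2-p}\le (2m)^{3-p}/(3-p)\le 2m^{3-p}$ more carefully — but since the statement is only used with $\epsilon=1/\log n$ to get an $O(|A|^2(\log|A|)^2)$ bound in Theorem~\ref{counterexamplethm}, the precise constant is immaterial, and the structure of your argument is sound.
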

\begin{proof}[Proof of \cref{counterexamplelemma}]
Let $x, y \in [1,\lfloor n^{1/p} \rfloor]$ with $x +1 < y$.  The main part of the argument is to establish the following bound:
\begin{equation}\label{diffeqn1} x^p + y^p - (x+1)^p - (y-1)^p > \frac{\epsilon(y-x)}{2y} \end{equation}
For now, assume \eqref{diffeqn1} holds.  If $x + y = z + w$, then by convexity, $x^p + y^p \neq z^p + w^p$ unless $z=x$ and $y=w$ or vice versa.  However, it may happen that $x+y=z+w$ and $\fl{x^p} + \fl{y^p} = \fl{z^p} + \fl{w^p}$.  Since $\fl{a^p} = a^p - [a^p]$, where $[a^p]$ is the noninteger part of $a^p$, we must have that if $x+y=z+w$ and
\[ \fl{x^p} + \fl{y^p} = \fl{z^p} + \fl{w^p} \]
then
\[ |x^p + y^p - z^p - w^p| < 2. \]
So, fixing an $x$ and a $y$, we can bound how many other pairs $z$ and $w$ can have $z+w = x+y$ and $\fl{z^p} + \fl{w^p} = \fl{x^p} + \fl{y^p}$.  More specifically, we find the largest $t$ such that
\[ x^p + y^p - (x+t)^p - (y-t)^p  < 2. \]
Using \eqref{diffeqn1}, the triangle inequality, and letting $k = y-x$ we get that
\[ x^p + y^p - (x+t)^p - (y-t)^p \ge \frac{\epsilon k}{2y} + \frac{\epsilon (k+2)}{2(y-1)} + \ldots + \frac{\epsilon(k+2(t-1))}{2(y-(t-1))} \]
Each term in the sum is greater than or equal to $\frac{\epsilon k }{2y}$, so we get a lower bound of $\frac{t\epsilon k }{2y}$.  So, if $t \ge \frac{4y}{\epsilon(y-x)}$, then we cannot have 
\[ \fl{x^p} + \fl{y^p} = \fl{(x+t)^p} + \fl{(y-t)^p}. \]
This allows us to conclude that any quadruple $(x,y,z,w)$ with $x+y = z+w$, with $x < z < w < y$, $z < x < y < w$, $w < y < x < z$, or $y < w < z < x$ we must have that $|z-x| < \frac{4y}{\epsilon(y-x)}$.  Accounting for an extra factor of $2$ for when $x<w < z < y$ and so on, we can bound the indexed energy of $A$
\[ EI(A) \le 2 \sum_{y} \sum_{x < y} \frac{4y}{\epsilon(y-x)} \]
Estimating this summation by using the harmonic series gets us that
\[ EI(A) \le \frac{16}{\epsilon}n^2 \log{n} \]
concluding the proof assuming that \eqref{diffeqn1} holds.

Now, we work to establish \eqref{diffeqn1}.
First, since $f(x):=x^p$ is convex for $p > 1$, it is easy to establish the following bound for any $x > 0$: 
\begin{equation}\label{convex} p(x+1)^{p-1} > (x+1)^p - x^p  > px^{p-1} \end{equation}
Assuming $p = 1+\epsilon < 2$, we have that $g(x) := x^{p-1}$ is concave.  Doing a similar analysis for $g(x)$, we get that for any $\ell \ge 1$
\begin{equation}\label{concave} 
(x+\ell)^{p-1} - x^{p-1} > \ell(p-1)(x+\ell)^{p-2}, \end{equation}
Using \eqref{convex}, we have
\[ x^p + y^p - (x+1)^p - (y-1)^p = \]
\[ = y^p - (y-1)^p - ((x+1)^p - x^p) > p(y-1)^{p-1} - p(x+1)^{p-1} \]
Using \eqref{concave} and reminding the reader that $k := y-x > 1$, we establish \eqref{diffeqn1}
\[ p[(y-1)^{p-1} - (y-k+1)^{p-1}] > p[(k-2)(p-1)(y-1)^{p-2}] > \frac{\epsilon k}{2y}. \]
\end{proof}
\cref{counterexamplethm} follows by letting $\epsilon = \frac{1}{\log{n}}$.
\begin{proof}[Proof of \cref{counterexamplethm}]
Let $A$ be as in the above lemma, let $\epsilon = \frac{1}{\log{n}}$.  Then, for $n$ sufficiently large
\[ |A| = \lfloor n^{\frac{1}{1+\epsilon}} \rfloor = \left \lfloor n^{\frac{1}{1+\frac{1}{\log{n}}}} \right \rfloor = \left \lfloor \frac{n}{e} \cdot e^{\frac{1}{1+\log{n}}} \right \rfloor \ge \left \lfloor \frac{n}{e} \right \rfloor \ge \frac{n}{3}. \]
So, $A \subseteq [1,n]$, $|A| \ge \frac{n}{3}$, and $A+A \subseteq [1,2n]$.  Thus, $|A+A| \le 2n \le 6 |A|$.  Hence,
\[ E(A) \ge \frac{|A|^4}{|A+A|} \ge \frac{|A|^3}{6}. \]
On the other hand, by \cref{counterexamplelemma}, for $A$ sufficiently large,
\[ EI(A) \le 16 n^2 (\log{n})^2 \le 16 \cdot (9|A|)^2 (\log{9|A|})^2 \le 1296 |A|^2 (\log{9|A|})^2 \le 2000 |A|^2 (\log{|A|})^2. \]
\end{proof}

\section{Further Applications and Conjectures}

Since $|(A \times B) + (A \times B)| = |A+A||B+B|$, it is obvious that if $|A+A| \le K|A|$ and $|B+B| \le K|B|$, then for any $C \subseteq A \times B$ of size $\Omega( |A||B| )$, one has $|C+C| \ll_K |C|$.  However, if $|C| = O(\sqrt{|A||B|})$, one has little control of $|C+C|$.  Does there exist a $C \subseteq A \times B$ with $|C| = O(\sqrt{|A||B|})$, and $|C+C| \ll_K |C|$?  Clearly one could simply take $C = \{(a,b): a\in A\}$ for a fixed $b \in B$.  If we forbid such sets lying on vertical or horizontal lines by additionally requiring that for any distinct $(x,y),(z,w) \in C$ we have $(x-z)(y-w) > 0$, the answer is not as obvious.

For a set $C \subseteq A_1 \times \ldots \times A_k$, call $C$ a \textbf{diagonal set} if for any distinct pairs of elements $(x_1, \ldots , x_k), (y_1, \ldots , y_k) \in C$, one has $x_i - y_i > 0$ for all $i$ or $x_i - y_i < 0$ for all $i$.  Moreover, we call $C$ \textbf{truly diagonal} if there exists an $X$ and a tuple $(t_1, \ldots , t_k)$ such that $C = \{ (x, \ldots , x) - (t_1, \ldots , t_k) : x \in X\}$.  Clearly a truly diagonal set is also a diagonal set.   
\begin{theorem}\label{easyindexenergy}
For any $k, K \in \mathbb{N}$, there exists $c_1, c_2$ such that the following holds.  Let $A_1, \ldots , A_k \subseteq \mathbb{Z}$ be sufficiently large sets of size $n$ such that $|A_i + A_i| \le K|A_i|$ for all $i = 1, \ldots , k$.  Then, there exists a truly diagonal set $C \subset A_1 \times \ldots \times A_k$ such that $|C+C| \le c_1|C|$ and $|C| = c_2 (|A_1| \ldots |A_k|)^{1/k} = c_2 n$.  
\end{theorem}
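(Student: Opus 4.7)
The plan is to apply the Condensing Lemma to each $A_i$ in order to reduce to the case where each set lives densely inside a common short interval, then use a shift-and-average argument to find many elements that simultaneously lie in all $k$ shifted copies, and finally pull back through the order-preserving Freiman isomorphisms to obtain a ``linear'' diagonal set whose sumset is controlled.

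First, I would apply \cref{condenselemma} to each $A_i$ separately, producing subsets $A_i' \subseteq A_i$ with $|A_i'| \gg_K n$ and order-preserving Freiman 2-isomorphisms $\phi_i : A_i' \to B_i$, where each $B_i$ sits inside an integer interval of length at most $Cn$ for some $C = C(K)$.  After translating each $B_i$ (using \cref{linearfact}), I may assume $B_i \subseteq [0, N]$ with $N \le Cn$ and $|B_i| \ge \delta n$ for some $\delta = \delta(K) > 0$.

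Next, setting $t_1 := 0$, I would find shifts $t_2, \ldots, t_k \in [-N,N]$ such that
\[ B := B_1 \cap (B_2 + t_2) \cap \cdots \cap (B_k + t_k) \]
has size $\gg_{k,K} n$. The standard Fubini identity
\[ \sum_{t_2, \ldots, t_k \in \mathbb{Z}} |B_1 \cap (B_2 + t_2) \cap \cdots \cap (B_k + t_k)| = \prod_{i=1}^k |B_i| \ge (\delta n)^k, \]
combined with the fact that only the $(2N+1)^{k-1} \ll_{k,K} n^{k-1}$ shifts with $t_i \in [-N,N]$ contribute, yields such shifts by averaging, giving $|B| \ge (\delta n)^k / (2Cn+1)^{k-1} \gg_{k,K} n$.

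Finally, I would define
\[ C := \{ (\phi_1^{-1}(b - t_1), \phi_2^{-1}(b - t_2), \ldots, \phi_k^{-1}(b - t_k)) : b \in B \}. \]
Because each $\phi_i$ is order-preserving, $b < b'$ in $B$ forces strict increase in every coordinate, so $C$ is a diagonal set, and $|C| = |B| \gg_{k,K} n$.  The step worth highlighting as the crux of the argument — and where the Freiman 2-isomorphism hypothesis is essential — is the sumset bound.  Two elements of $C+C$ coming from pairs $(b,b')$ and $(b'',b''')$ in $B \times B$ agree exactly when
\[ \phi_i^{-1}(b - t_i) + \phi_i^{-1}(b' - t_i) = \phi_i^{-1}(b'' - t_i) + \phi_i^{-1}(b''' - t_i) \qquad \text{for every } i, \]
and since each $\phi_i$ is a Freiman 2-isomorphism, each of these $k$ equalities is equivalent to $b + b' = b'' + b'''$ (the shifts $t_i$ cancel), a condition independent of $i$.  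Hence the sum tuple is determined by $b+b'$ alone, so $|C+C| \le |B+B| \le 2N + 1 \ll_{K} 1 \cdot |C|$, which gives the required bound $|C+C| \le c_1 |C|$ with $c_1 = c_1(k,K)$.
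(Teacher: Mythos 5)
Your proposal follows essentially the same approach as the paper's proof: apply the Condensing Lemma to each $A_i$, shift-and-average to find translates with a large common intersection, and construct a diagonal set out of that intersection. In fact your version is slightly more careful at the final step than the paper's: the paper builds $C$ directly as $\{(x-t_1,\ldots,x-t_k):x\in C'\}$, which lands in $B_1\times\cdots\times B_k$ rather than the required $A_1\times\cdots\times A_k$, and leaves the pullback through the Freiman isomorphisms implicit; you explicitly apply $\phi_i^{-1}$ and correctly observe that both diagonality (via order-preservation) and the sumset bound (via the Freiman 2-isomorphism property forcing all $k$ coordinate equalities to collapse to $b+b'=b''+b'''$) survive the pullback. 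The only cosmetic difference is that you use a single Fubini average over all $(t_2,\ldots,t_k)$ rather than the paper's induction on $k$; both give the same $\gg_{k,K} n$ bound.
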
 
\begin{proof}
We may apply the Condensing Lemma to each $A_i$ individually to find constants $c_{1,i}, c_{2,i}$ depending on $K$ such that there exists a subset $A_i' \subseteq A_i$ and an order-preserving Freiman isomorphism to a set $B_i \subseteq [0, c_{1,k}n]$ with $|A_i'| \ge c_{2,i} n$.  Let $c_1$ be the maximum of $\{c_{1,i}: i = 1, \ldots , k\}$ and let $c_2$ be the minimum of $\{ c_{2,i}: i = 1, \ldots , k \}$.  So, we may view all the $B_i$ as being dense in the interval $[0,c_1 n]$.  Next, we claim that there exists $t_1, \ldots , t_k \in \mathbb{Z}$ such that
\[ \left | \bigcap_{i=1}^k (B_i + t_i) \right | \ge \frac{c_2^k}{(2c_1)^{k-1}} n. \]
We prove this by induction on $k$.  For $k = 1$, it is trivial.  For the induction step, let $X, Y \subset [1,c_1 n]$ be of size $\delta_1 n$ and $\delta_2 n$ respectively.  Then,
\[ \sum_{t = -(c_1 n-1)}^{c_1 n-1} |(X+t) \cap Y| = |X||Y| = \delta_1 \delta_2 n^2. \]
Hence, there exists a $t$ such that
\[ |(X+t) \cap Y| \ge \frac{\delta_1 \delta_2}{2 c_1} n. \]
Letting $X := B_{k}$ and $Y := \cap_{i=1}^{k-1} B_i + t_i$ finishes the inductive argument.  Now, let $C' := \cap_{i=1}^k B_i + t_i$, and denote $C' := \{ x_1 < \ldots < x_m \}$.  We let $C$ be the following set:
\[ C:= \{ (x_i - t_1, x_i - t_2 , \ldots , x_i - t_m): i = 1, \ldots , m \}. \]
Since $x_i - t_j \in B_j$, we have that $C \subseteq B_1 \times \ldots \times B_k$.  Since $x_i - t_j > x_{\ell} - t_j$ for $i > \ell$, $C$ must be diagonal.  Also, $|C| = |C'| \ge \frac{c_2^k}{(2c_1)^{k-1}}$.  Lastly, it is easy to see that
\[ |C+C| = |C'+C'| \le 2n = \frac{2^k c_1^{k-1}}{c_2^k} |C'|. \] 
\end{proof}
Although the above application is similar in spirit to the indexed energy problem -- letting $A \times B := A \times [1,|A|]$, where $B$ is the set of indices -- there are several subtle differences.  Mainly, in the indexed energy problem, when we pass to a subset, we are forced to reindex the set in a very specific way.  Therefore, this problem is related to, but does not imply \cref{mainthm}.  The following conjecture however would be general enough to imply \cref{mainthm}.
\begin{conjecture}\label{genindenergy}
Let $A, B \subseteq \mathbb{Z}$ be sets of size $N$ such that $|A+A|,|B+B| \le KN$.  Then, there exists $c_1, c_2$ depending only on $K$ such that the following holds.  There exists an $A' \subseteq A$ with $|A'| \ge c_1 |A|$, and if we denote $A' := \{ a_1' < \ldots < a_k' \}$ and $B := \{ b_1 < \ldots < b_n \}$, then
\[ |\{(a_i', a_j', a_k', a_{\ell}') : a_i' + a_j' = a_k' + a_{\ell}' \text{ and } b_i + b_j = b_k + b_{\ell} \} | \ge c_2 |A'|^3. \] 
%
\end{conjecture}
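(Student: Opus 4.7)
The plan is to apply the Condensing Lemma twice — once to $A$ and once to a large prefix of $B$ — reducing the problem to a bipartite version of \cref{denseinterval}, and then to attempt a joint-equidistribution adaptation of the proof of \cref{goodspacing}. First, apply \cref{condenselemma} to $A$ to obtain $\tilde{A} \subseteq A$ with $|\tilde{A}| \gg_K N$ and an order-preserving Freiman 2-isomorphism $\phi : \tilde{A} \to I$, where $I \subset \mathbb{Z}$ is an interval of length $O_K(N)$. Set $A' := \tilde{A}$ and $m := |A'|$. Next, observe that the prefix $P_m := \{b_1, \ldots, b_m\} \subseteq B$ inherits small doubling with a degraded constant: $|P_m + P_m| \le |B + B| \le KN \le (KN/m) |P_m|$, and since $m \gg_K N$ the doubling of $P_m$ is $O_K(1)$. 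Apply \cref{condenselemma} to $P_m$ to obtain $P_m^* \subseteq P_m$ with $|P_m^*| \gg_K m$ and an order-preserving Freiman 2-isomorphism $\psi : P_m^* \to J$ for some interval $J$ of length $O_K(m)$.

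Let $T := \{ i \in [m] : b_i \in P_m^* \}$, so $|T| \gg_K m$. For indices $i \in T$, both equations in the conjecture transfer through the isomorphisms $\phi$ and $\psi$: the condition $a'_i + a'_j = a'_p + a'_q$ becomes $\phi(a'_i) + \phi(a'_j) = \phi(a'_p) + \phi(a'_q)$, and the condition $b_i + b_j = b_p + b_q$ becomes $\psi(b_i) + \psi(b_j) = \psi(b_p) + \psi(b_q)$. Writing $x_i := \phi(a'_i) \in I$ and $z_i := \psi(b_i) \in J$ for $i \in T$, the problem reduces to counting quadruples $(i, j, p, q) \in T^4$ with $x_i + x_j = x_p + x_q$ and $z_i + z_j = z_p + z_q$. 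The sequences $(x_i)_{i \in T}$ and $(z_i)_{i \in T}$ are monotone in $i$ and land in intervals of length $O_K(m)$, each of size $|T| \gg_K m$, so both are dense in their respective intervals, and obtaining $\gg m^3$ such quadruples gives the conjecture with this $A'$.

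The remaining task is a bipartite version of \cref{denseinterval}: given two dense subsets of intervals of length $O(n)$ monotonically paired by a common index, find $\gg n^3$ simultaneous additive quadruples (passing to a subset of the indices if necessary). I would attempt this by adapting the greedy procedure in \cref{goodspacing} to operate on the pair $(x_i, z_i)$ at once: partition $I$ and $J$ into blocks of a common length $d$, and greedily select indices $i$ so that both projected sequences become simultaneously equidistributed over their respective block structures. Once both projections are equidistributed at a common spacing, the Cauchy--Schwarz double count used in the proof of \cref{denseinterval} can be iterated in both coordinates along the diagonal fibers to produce $\gg |T|^3$ simultaneous quadruples.

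The principal obstacle is this joint equidistribution step. The one-sided argument in \cref{goodspacing} uses substantial greedy slack to thin to an equidistributed subset; under the monotonically locked pairing, however, choosing an index $i$ so as to equidistribute the $(x_i)$ commits the value of $z_i$, and the resulting retained subsequence may be highly non-equidistributed in $J$. Overcoming this will likely require either an iterated pigeonhole that alternates between the two projections, or a genuinely new structural input — perhaps showing that any monotone staircase of $n$ points in a product of two intervals of length $O(n)$ admits a large subset that is AP-like in both projections simultaneously. This appears to be the true content of the conjecture, and is presumably why the authors state it as a conjecture rather than prove it.
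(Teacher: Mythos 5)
This statement appears in the paper as Conjecture~\ref{genindenergy} and is left open; the paper gives no proof, so there is no argument to compare against. Your proposal correctly recognizes this and does not claim to close the argument, which is the right assessment.

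Your reduction is sound as far as it goes. Applying \cref{condenselemma} to $A$ to obtain $A' = \tilde{A}$ with $|A'| = m \gg_K N$, noting that only the prefix $P_m := \{b_1,\ldots,b_m\}$ of $B$ enters the statement (since the indices in the quadruple range over $[1,m]$), observing the degraded doubling $|P_m + P_m| \le KN \le (KN/m)|P_m| = O_K(|P_m|)$, and applying \cref{condenselemma} again to $P_m$ are all legitimate steps. Restricting to $T := \{ i \in [m] : b_i \in P_m^*\}$ with $|T| \gg_K m$ and transferring through the two order-preserving Freiman $2$-isomorphisms correctly reduces the conjecture to: given two sequences $(x_i)_{i \in T}$ and $(z_i)_{i \in T}$, both monotone in $i$ and both $\gg_K$-dense in intervals of length $O_K(m)$, find $\gg_K m^3$ quadruples $(i,j,p,q)\in T^4$ with $x_i + x_j = x_p + x_q$ and $z_i + z_j = z_p + z_q$. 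You also correctly note that one does not pass to a further subset of $A'$ here, only restricts the indices over which one counts, so the target denominator stays $m^3$ and $|T| \gg_K m$ keeps the count meaningful.

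The gap you flag is the genuine one, and you have located it precisely. The greedy thinning in \cref{goodspacing} relies on freedom to discard elements so as to force equidistribution at a fixed block scale; once the pair $(x_i, z_i)$ is locked to a single index $i$, any equidistribution you impose on the $x$-projection is inherited uncontrollably by the $z$-projection, and the Cauchy--Schwarz count in the proof of \cref{denseinterval} has no mechanism to run simultaneously in both coordinates unless both projections are equidistributed at a common scale. This is exactly the sort of two-dimensional equidistribution obstacle the authors raise in the final conjecture of Section~4 about subsets of $[1,n]\times[1,n]$, which is offered there without proof. In short, your reduction is correct and forces the problem to its combinatorial core (a monotone staircase of $\gg n$ points in a product of two length-$O(n)$ intervals, from which one must extract a large subset with compatible additive structure in both projections), but that core remains unresolved, consistent with the statement being a conjecture.
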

\cref{genindenergy} is true in the case where $B = [1,N]$ (or any arithmetic progression of size $N$) since this then becomes the indexed energy result.  It would be interesting to know whether the conjecture is even true in the case where $B$ is a generalized arithmetic progression of dimension 2.

Another problem closely related to the indexed energy problem is as follows.  Let $A \subseteq \mathbb{Z}$ and let $f:A \rightarrow \mathbb{Z}$ be such that $|f(A) + f(A)| \le c|A|$, and $|A+A| \le c|A|$.  Let $E_f$ denote the additive energy of the graph of $f$.  More precisely,
\[ E_f(A) := \{ (a,b,c,d): a+b = c+d, f(a) + f(b) = f(c) + f(d) \}. \]
When $f$ is the indexing function for a set $A$, $E_f(A)$ becomes $EI(A)$.  What is the relation between $E_f(A)$ and $E(A)$?  Here, we point out to the reader a subtle but important difference between this problem and the indexed energy problem: when passing to a subset, there is a natural way to reindex a set which is distinctly different than how a function restricted to a subset behaves.  Therefore, $E_f$ is not simply a generalization of $EI$.  Due to this lack of reindexing, there is not always an $A' \subseteq A$ with $E_f(A') \gg_K |A|^3$ when $E(A) \ge K|A|^3$.  For instance, let $f$ be the indexing function, let $A$ be as in \cref{counterexamplethm}, and since sets are not reindexed
\[ E_f(A') \le EI(A) \ll_K |A|^2\log{|A|}. \]
Moreover, $|\{(a + a', f(a) + f(a')): a,a' \in A\}| \gg |A|^2/\log{|A|}$.  As an openended question, we ask if there are any reasonable conditions that we can impose on $f$ or $A$ to arrive at a different conclusion?

Lastly, we remark that the content of \cref{goodspacing} is making a statement about equidistribution of a set in an interval.  This has been a well-studied topic in discrepancy theory; however, we are not aware of it appearing in this specific, combinatorial form -- where one is allowed to pass to a subset of the original set, and one only requires that for lots of interval, the subset is well-distributed.  We tepidly conjecture a generalization of \cref{goodspacing} to higher dimensions, but it would also be interesting if a counterexample was found.
\begin{conjecture}
Let $A \subseteq [1,n] \times [1,n]$ be of size $|A| = \delta n^2$.  There exists constants $c_1, c_2, c_3$ depending only on $\delta$ such that the following holds.  There exists an $A' \subseteq A$ such that $|A'| \ge c_1 |A|$ and for $c_2 n^2$ pairs $0 \le i,j \le n/c_3$, $|A' \cap [0, i c_3) \times [0, j c_3)| = ij$. 
\end{conjecture}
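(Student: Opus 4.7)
The plan is to attempt a two-dimensional analogue of the greedy procedure from the proof of \cref{goodspacing}. Partition $[1,n]^2$ into axis-aligned boxes $I_{i,j} := [(i-1)c_3, ic_3) \times [(j-1)c_3, jc_3)$ of side $c_3 = \Theta(1/\delta)$, with $m = \lfloor n/c_3 \rfloor$ boxes per side, so $\sum_{i,j} |A \cap I_{i,j}| = \delta n^2$. Process the boxes in any linear extension of the componentwise partial order on $[1,m]^2$ (for instance, in order of $i+j$); when box $(i,j)$ is processed, every box $(i',j') \prec (i,j)$ has already been visited, so the running count $S_{i,j} := |A' \cap ([0,ic_3) \times [0,jc_3))|$ is stable. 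Apply the greedy rule that adds $\min(|A \cap I_{i,j}|,\, ij - S_{i,j})$ points from $A \cap I_{i,j}$ to $A'$, declaring $(i,j)$ \emph{good} when $ij - S_{i,j}$ is the active constraint and \emph{bad} otherwise.

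Assuming the invariant $F_{i,j} := S_{i,j} + |Y_{i,j}| \le ij$ is preserved throughout (here $Y_{i,j}$ denotes the set of points added at step $(i,j)$), the analysis would mimic the one-dimensional argument: restricting to any fixed row $i$ and looking at consecutive good pairs $(i,j_\ell),(i,j_{\ell+1})$, the intermediate bad boxes must contribute only a bounded amount of mass in that row (an inequality analogous to $\sum_{s=1}^k |A_{j_i+s}| \le k-1$ in the proof of \cref{goodspacing}), and summing against $|A| = \delta n^2$ together with averaging over rows should deliver $\Omega_\delta(n^2)$ good pairs, as required. The lower bound $|A'| \ge c_1 |A|$ would then follow by combining the contribution of bad boxes (where all points are included) with the contribution of good pairs.

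The principal obstacle is verifying the invariant $F_{i,j} \le ij$. By inclusion--exclusion, $S_{i,j} = F_{i-1,j} + F_{i,j-1} - F_{i-1,j-1}$, so preserving $F_{i,j} \le ij$ amounts to having $F_{i-1,j-1} \ge F_{i-1,j} + F_{i,j-1} - ij$ at every step. This can fail: if the interior sub-grid $[0,(i-1)c_3) \times [0,(j-1)c_3)$ is sparse in $A$ while the boundary strips $[0,ic_3) \times [0,c_3)$ and $[0,c_3) \times [0,jc_3)$ are dense, the greedy saturates the two edges at previous stages and has no slack left to absorb the interior constraint at $(i,j)$. I expect the resolution to require either a modified ordering that ``pre-books'' room for the interior boxes, or a preliminary marginal-regularization step in which \cref{goodspacing} is applied to the row-sums $r_j := |A \cap ([1,n] \times [(j-1)c_3,jc_3))|$ and the analogous column-sums to obtain well-behaved one-dimensional marginals before running the two-dimensional greedy. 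This genuinely two-dimensional interaction between the coordinates --- absent in \cref{goodspacing} --- is the main technical difficulty, and it is the reason the authors leave the statement as a conjecture.
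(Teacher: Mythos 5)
This statement is left as an open conjecture in the paper --- the authors explicitly ``tepidly conjecture'' it and invite counterexamples --- so there is no paper proof against which to compare. Your write-up is appropriately cautious in not claiming a complete proof, and your diagnosis of where the one-dimensional greedy from \cref{goodspacing} breaks down is accurate: the identity $S_{i,j} = F_{i-1,j} + F_{i,j-1} - F_{i-1,j-1}$ shows that maintaining $F_{i,j} \le ij$ requires the lower-left box to be nearly saturated, i.e.\ $F_{i-1,j-1} \ge (i-1)(j-1) - 1$, and nothing in the greedy procedure enforces this. In one dimension the constraints $F_i \le i$ are nested and the greedy never has to ``give back'' slack; in two dimensions the inclusion--exclusion subtraction means that saturating the two boundary strips early can leave negative slack for interior constraints, exactly as you describe. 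That this failure mode is real and not merely an artifact of the particular greedy order is the genuine content of the difficulty.

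Your two proposed repairs --- a preliminary application of \cref{goodspacing} to the marginals, or a more clever processing order --- are sensible directions, but as you note they are undeveloped, and it is not clear either can close the gap: controlling the marginals does not obviously control the joint distribution inside sub-rectangles, which is what the constraint $|A' \cap [0,ic_3) \times [0,jc_3)| = ij$ actually demands. The honest conclusion is that this remains open, and your analysis correctly identifies the two-coordinate interaction as the obstruction that \cref{goodspacing} did not have to face.
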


\noindent \textbf{Acknowledgment: } The authors would like to thank the anonymous referee for a careful reading of the paper that resulted in many helpful comments, corrections, and suggestions that greatly improved the presentation of the paper.

\bibliographystyle{plain}
\bibliography{C:/Users/Albert/Desktop/TeX/bigaddcomb}

%
%
%
%
%
%


\end{document}